\newcommand{\eq}{\begin{equation}}
\newcommand{\eqe}{\end{equation}}
\newcommand{\g}{\gamma}
\newcommand{\e}{\epsilon}
\renewcommand{\d}{\delta}
\newcommand{\eqa}{\begin{eqnarray}}
\newcommand{\eqae}{\end{eqnarray}}
\newcommand{\comment}[1]{}
\newcommand{\SL}{\operatorname{\textsl{SL}}}      
\newcommand{\GL}{\operatorname{\textsl{GL}}}      
\newcommand{\RR}{{\mathbb R}}
\newcommand{\C}{{\mathbb C}}
\newcommand{\CC}{{\mathbb C}}
\newcommand{\Z}{{\mathbb Z}}
\newcommand{\ZZ}{{\mathbb Z}}
\newcommand{\QQ}{{\mathbb Q}}
\newcommand{\HH}{{\mathbb H}}
\newcommand{\Res}{\operatorname{Res}}
\newcommand{\Ind}{\operatorname{Ind}}
\newcommand{\BM}{\left(\begin{matrix}}
\newcommand{\EM}{\end{matrix}\right)}
\newcommand{\WT}{\widetilde} 
\newcommand{\A}{\alpha} 
\newcommand{\B}{\beta} 
\newcommand{\OO}{\mathcal{O}}
\newtheorem{thm}{Theorem}[section]
\newtheorem{lem}[thm]{Lemma}
\newtheorem{prop}[thm]{Proposition}
\begin{document}

\setstretch{1.2}

\titlepage

\title{Modular Forms on the Double Half-Plane}

\author{John F.~R.~Duncan$^1$ and David A.~McGady$^2$}
\affiliation{$^1$Department of Mathematics,
Emory University, Atlanta, GA 30322, USA (\href{mailto:john.duncan@emory.edu}{john.duncan@emory.edu})\\
$^2$Nordita, KTH Royal Institute of Technology and Stockholm University, 
Roslagstullsbacken 23, SE-106 91 Stockholm, Sweden\\
{\rm and} The Niels Bohr International Academy and Discovery Center,\\ Niels Bohr Institute, Blegdamsvej 17, DK-2100 Copenhagen, Denmark (\href{mailto:dmcgady@alumni.princeton.edu}{dmcgady@alumni.princeton.edu})}

\begin{abstract}
We formulate a notion of modular form on the double half-plane for half-integral weights and explain its relationship to the usual notion of modular form. The construction we provide is compatible with certain physical considerations due to the second author. 
\end{abstract}

\maketitle


\section{Introduction}\label{sec:intro}

A modular form is a holomorphic function on the complex upper half-plane that enjoys rich symmetry. More specifically, a modular form should transform in a prescribed way with respect to the mappings that encode conformally equivalent two-dimensional tori (see e.g. \cite{01-Apostol, 02-Serre, 03-Knapp} for detailed expositions). For this reason two-dimensional conformal field theory is a rich source of modular forms (see e.g. \cite{04-Yellow, 05-PolchinskiI}), but in \cite{06T-rex2} it is argued that the modular forms that arise in this way should be defined on both the upper and lower half-planes, and moreover should transform nicely with respect to the reflection $z\mapsto -z$, which interchanges these two half-planes. 

The argument for modular forms of even weight (with trivial character) is particularly transparent: these forms constitute a ring which is generated by the {\em Eisenstein series} $E_4$ and $E_6$, where
\begin{gather}\label{eqn:Ekdefn}
 E_k(z):=\sum_{\substack{m,n\in\ZZ\\(m,n)\neq (0,0)}} (mz+n)^{-k}.
\end{gather}
The right hand side of (\ref{eqn:Ekdefn}) is exactly the same if we replace $z$ with $-z$ so it is natural to extend $E_k$, and any modular form of even weight, to the {\em double half-plane} 
\begin{gather}
\HH^*:=\{z\in \CC\mid \Im(z)\neq 0\}
\end{gather} 
by requiring $f(z)=f(-z)$ for $z\in \HH^*$. 

What can we say about more general weights and more general multipliers? For a motivating example consider the {\em Dedekind eta function}, which is defined for $\Im(z)>0$ by setting 
\begin{gather}
\eta(z):=e^{\frac{\pi i z}{12}}\prod_{n=1}^\infty(1-e^{2\pi i n z}).
\end{gather} 
This function satisfies $\eta(z+1)=e^{\frac{\pi i}{12}}\eta(z)$ and $\eta(-\frac1z)=\sqrt{-iz}\eta(z)$ for $\Im(z)>0$ where $\sqrt{re^{i\theta}}:=\sqrt{r}e^{i\frac{\theta}{2}}$ for $-\pi<\theta<\pi$. It is a modular form of weight $\frac12$ with a certain multiplier, and is one of the many examples that arise from conformal field theory. 

If the Dedekind eta function is to extend to the lower half-plane, how should its values at $z$ and $-z$ be related? The following heuristic answer is presented in \cite{07T-rex1, 06T-rex2}. Define $Z_N(z):=\prod_{n=1}^N(e^{-\pi i n z}-e^{\pi i n z})$ for $z\in \CC$ and recall that the {\em Riemann zeta function} $\zeta(s)$ is a meromorphic function on the complex plane that satisfies $\zeta(s)=\sum_{n=1}^\infty n^{-s}$ for $\Re(s)>1$, and also $\zeta(0)=-\frac12$ and $\zeta(-1)=-\frac1{12}$. We have
\begin{gather}
	Z_N(z)=(e^{-\pi i z})^{\sum_{n=1}^N n}\prod_{n=1}^N(1-e^{2\pi i n z}).
\end{gather}
So we obtain a formal identity ``$\lim_{N\to \infty}Z_N(z)=\eta(z)$'' if we replace $\lim_{N\to \infty}\sum_{n=1}^N n$ with $\zeta(-1)=-\frac1{12}$ in $\lim_{N\to \infty}Z_N(z)$. Now we also have $Z_N(-z)=(-1)^NZ_N(z)$. So again taking a limit over $N$ we obtain 
\begin{gather}\label{eqn:etaRxform}
``\eta(-z)=(-1)^{-\frac12}\eta(z)"
\end{gather} 
if we replace $\lim_{N\to \infty}Z_N(z)$ with $\eta(z)$ and $\lim_{N\to \infty}N=\lim_{N\to \infty}\sum_{n=1}^N 1$ with $\zeta(0)=-\frac12$.

So the prediction of \cite{07T-rex1, 06T-rex2, 08T-rex0} is that there should be an extension of $\eta$ to $\HH^*$ with the property that $\eta(z)$ differs from $\eta(-z)$ by a fourth root of unity. (See \cite{09-Lfunctions} for a more general discussion along similar lines.) In this note we show that such an extension exists, and we situate it within the larger framework of {\em (vector-valued) modular forms on the double half-plane}. We formulate this notion for arbitrary integral and half-integral weights (see \S\ref{sec:mdlfmsgl2}), and we characterize all such modular forms in terms of the usual modular forms on the upper half-plane (see Theorems \ref{thm:resisom} and \ref{thm:indisom}). The Dedekind eta function gives rise to a particular example which manifests a precise interpretation of the heuristic identity (\ref{eqn:etaRxform}). 

The rest of the paper is structured as follows. We first review modular forms on the upper half-plane very briefly in \S\ref{sec:modfmssl2}. Then in \S\ref{sec:metcvrgl2} we discuss a certain double cover of $\GL_2(\ZZ)$ which plays a key role in our construction. In \S\ref{sec:fnsdblhlfpln} we define actions of this double cover on holomorphic functions on the double half-plane. In \S\ref{sec:mdlfmsgl2} we define modular forms on the double half-plane and relate them to the usual modular forms. Finally, in \S\ref{sec:egs} we review the examples discussed above from within the new framework.

All of the arguments in this note are elementary, but some are a bit involved. For the sake of completeness we have opted to include more detail rather than less.

\section{Modular Forms on the Upper Half-Plane}
\label{sec:modfmssl2}

The {metaplectic double cover} of $\SL_2(\Z)$ may be realized as the set of pairs $(\g,\phi)$ where $\g\in \SL_2(\Z)$ and $\phi$ is a holomorphic function on the upper half-plane $\HH^+ := \{ z \in \C \mid \Im(z) > 0\}$ such that $\phi(z)^2=cz+d$ when $\g=\left(\begin{smallmatrix}*&*\\c&d\end{smallmatrix}\right)$. It becomes a group, which we denote $\widetilde{\SL_2}(\Z)$, when equipped with the multiplication rule 
\begin{align}
\label{eqn:tildesl2mult}
(\A,\phi(z))(\B,\psi(z)):=(\A\B,\phi(\B z)\psi(z)).
\end{align} 

A main application of this formulation is to modular forms of half-integral weight. Indeed, for $k\in \frac12\Z$ the {\em weight $k$ (right) action} of $\widetilde{\SL_2}(\Z)$ on the space $\OO(\HH^+)$ of holomorphic functions on the upper half-plane is given by
\begin{align}
\label{eqn:fplustildegamma}
	(f|_k(\g,\phi))(z) := f(\g z)\phi(z)^{-2k}
\end{align}
(cf. Proposition \ref{prop:itisanaction}) for $f\in \OO(\HH^+)$ and $(\g,\phi)\in \widetilde{\SL_2}(\Z)$ and $z\in \HH^+$. 
Given a homomorphism $\rho:\widetilde{\SL_2}(\Z)\to \C^*$, we may define a {\em modular form} of weight $k$ for $\widetilde{\SL_2}(\Z)$ with character $\rho$ to be a holomorphic function $f\in\OO(\HH^+)$ such that $f|_k\WT\g=\rho(\WT\g)f$ for all $\WT\g\in \widetilde{\SL_2}(\Z)$. 
Often one simply speaks of {\em modular forms for $\SL_2(\Z)$}. For $k\in \Z$ there is no abuse in this since $\phi^{-2k}=(-\phi)^{-2k}$. For $k\in \Z+\frac12$ it's more natural to work with $\widetilde{\SL_2}(\Z)$.

In this note we wish to consider modular forms of (integral and) half-integral weight for $\GL_2(\Z)$. Define the {\em lower half-plane} to be $\HH^-:=\{z\in \C\mid \Im(z)<0\}$. The group $\GL_2(\Z)$ acts naturally on the {double half-plane} $\HH^*=\HH^+\cup\HH^-$, so we require a double cover of $\GL_2(\Z)$ that contains $\widetilde{\SL_2}(\Z)$, and a compatible action on the space $\OO(\HH^*)$ of holomorphic functions on the double half-plane. 

\section{A metaplectic cover}
\label{sec:metcvrgl2}

There are exactly two double covers of $\GL_2(\Z)$ that contain a copy of $\widetilde{\SL_2}(\Z)$. The one that will be of use to us is distinguished by the property that any preimage of $R:=\left(\begin{smallmatrix}-1&0\\0&1\end{smallmatrix}\right)\in \GL_2(\Z)$ has order $4$. (In the other double cover that contains $\widetilde{\SL_2}(\Z)$ a preimage of $R$ has order $2$.) 

To perform explicit computations in this cover it is convenient to employ a cocycle due to Kubota~\cite{10-Kubota}. In order to describe this we first introduce Kubota's 
function $\chi:\GL_2(\Z)\to \Z$, which is defined by setting
\begin{align}
\label{eqn:Kubotachi}
\chi(\g) = \chi\left(\left(\begin{smallmatrix} a&b\\c&d\end{smallmatrix}\right)\right):=
\begin{cases}
	c&\text{ if $c\neq 0$,}\\
	d&\text{ if $c=0$.}
\end{cases}
\end{align}
Next we recall the {\em Hilbert symbol} (at the infinite place of $\QQ$) which is defined for $a,b\in \RR^*$ by setting
\begin{align}
\label{eqn:Hilbertsymbol}
(a,b):=
\begin{cases}
	-1&\text{ if $a<0$ and $b<0$,}\\
	1&\text{ otherwise.}
\end{cases}
\end{align}
Then, following~\cite{11-Budden-Goehle}, we define {\em Kubota's twisted $2$-cocycle} for $\GL_2(\Z)$ by setting
\begin{align}
\label{eqAtwist}
A(\A,\B):=
\left(\det(\A),\det(\B)\right)
\left(\frac{\chi(\A\B)}{\chi(\A)},\frac{\chi(\A\B)}{\chi(\B)\det(\A)}\right)
\end{align}
for $\A,\B\in\GL_2(\Z)$. (The cocycle $A$ in (\ref{eqAtwist}) is $\sigma_2^{(1)}$ in the notation of~\cite{11-Budden-Goehle}. The other double cover of $\GL_2(\Z)$ is obtained by using $\sigma_2^{(0)}(\A,\B):= \left(\frac{\chi(\A\B)}{\chi(\A)},\frac{\chi(\A\B)}{\chi(\B)\det(\A)}\right)$ instead.)

Now we may realize $\widetilde{\GL_2}(\Z)$ as the set of pairs $[\g,\e]$ where $\g \in \GL_2(\Z)$ and $\e\in\{\pm 1\}$, and the multiplication is given by
\begin{align}
\label{eqProd}
	[\A,\e][\B,\delta]:=[\A\B,A(\A,\B)\e\delta].
\end{align}
The associativity of the multiplication rule (\ref{eqProd}) is equivalent to the cocycle identity 
\begin{gather}\label{eqn:cocycleidentityA}
	A(\A,\B)A(\A\B,\g)=A(\A,\B\g)A(\B,\g)
\end{gather}
for $\A,\B,\g\in \GL_2(\Z)$.

If we define $S:=\left(\begin{smallmatrix} 0&-1\\1&0\end{smallmatrix}\right)$ and $T:=\left(\begin{smallmatrix} 1&1\\0&1\end{smallmatrix}\right)$ then $\{ S,T\}$ is a generating set for $\SL_2(\Z)$. Define preimages of $S$ and $T$ in $\widetilde{\GL_2}(\Z)$ by setting $\WT S:= [S,1]$ and $\WT T := [T,1]$, and also set $\WT Z:=[-I,1]$. Then $\WT S$ and $\WT T$ generate a subgroup of $\widetilde{\GL_2}(\Z)$ that is isomorphic to $\widetilde{\SL_2}(\Z)$, and the center of this copy of $\widetilde{\SL_2}(\Z)$ is generated by $\WT S^2=\WT Z$, which has order $4$. Next define $R:=\left(\begin{smallmatrix} -1&0\\0&1\end{smallmatrix}\right)$ and set $\WT R:=[R,1]$. Then, with the twisted product defined in Eqs.~\eqref{eqAtwist} and~\eqref{eqProd}, we have that $\WT S^4=\WT Z^2=\WT R^2 =[I,-1]$ is the unique non-trivial central element of $\widetilde{\GL_2}(\Z)$. We pause here to emphasize that whilst $\WT Z$ is a generator for the center of $\widetilde{\SL_2}(\Z)$, which has order $4$, it will develop momentarily that there is a unique non-trivial central element in $\widetilde{\GL_2}(\Z)$. In particular $\WT Z$ is not central in $\widetilde{\GL_2}(\Z)$.

The fact that $\chi(R\gamma)=\chi(\gamma)$ for $\gamma\in \GL_2(\Z)$ leads to useful identities for the cocycle (\ref{eqAtwist}). The next lemma is a basic example of this. To formulate it set
\begin{align}
\label{eqn:betagamma}
	B(\g):=(\chi(\g),\chi(\g R))
\end{align}
for $\gamma\in \SL_2(\Z)$.
\begin{lem}\label{lem:AAB}
For $\g\in\SL_2(\Z)$ we have $A(R,\gamma)A(R\gamma,R)=A(R,\gamma R)A(\gamma,R) = (-1)B(\gamma)$.
\end{lem}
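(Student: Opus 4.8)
The plan is to compute both products in $\widetilde{\GL_2}(\Z)$ directly from the definition~\eqref{eqAtwist} of Kubota's twisted cocycle, exploiting the single identity $\chi(R\g)=\chi(\g)$ throughout. The key observation is that since $\det(R)=-1$ and $\det(\g)=1$ for $\g\in\SL_2(\Z)$, the $\bigl(\det(\A),\det(\B)\bigr)$ factor in $A(R,\g)$, $A(\g,R)$, $A(R\g,R)$, $A(R,\g R)$ is $(1,-1)=1$ in the first two cases and $(-1,-1)=-1$ in the latter two (since $\det(R\g)=-1$). So the ``$(-1)$'' on the right-hand side will come from exactly one factor in each product, namely $A(R\g,R)$ in the first product and $A(\g R,\g)$... wait, from $A(R\g,R)$ resp.\ from the determinant factor of whichever of the two factors has a negative-determinant first argument; a careful bookkeeping of which of $A(R,\gamma R)$, $A(\gamma,R)$ carries it is part of the calculation.

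Concretely, I would first expand $A(R,\g)=\bigl(\tfrac{\chi(R\g)}{\chi(R)},\tfrac{\chi(R\g)}{\chi(\g)\det(R)}\bigr)$. Since $\chi(R)=1$ (as $R=\left(\begin{smallmatrix}-1&0\\0&1\end{smallmatrix}\right)$ has lower-left entry $0$ and lower-right entry $1$) and $\chi(R\g)=\chi(\g)$, this simplifies to $\bigl(\chi(\g),\,\tfrac{\chi(\g)}{-\chi(\g)}\bigr)=\bigl(\chi(\g),-1\bigr)$, which equals $1$ if $\chi(\g)>0$ and $-1$ if $\chi(\g)<0$. Next I expand $A(R\g,R)=\bigl(\det(R\g),\det(R)\bigr)\bigl(\tfrac{\chi(R\g R)}{\chi(R\g)},\tfrac{\chi(R\g R)}{\chi(R)\det(R\g)}\bigr)=(-1,-1)\bigl(\tfrac{\chi(R\g R)}{\chi(\g)},-\chi(R\g R)\bigr)$, using $\chi(R\g R)=\chi(\g R)$. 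Multiplying these two and comparing with $(-1)B(\g)=-\bigl(\chi(\g),\chi(\g R)\bigr)$ should be a short Hilbert-symbol manipulation once one recalls that $(a,b)$ is bimultiplicative modulo squares and $(a,-1)=\operatorname{sgn}$-type; in particular I would use $(a,b)(a,c)=(a,bc)$ and $(a,b)=(a,-ab^{-1})$-style rearrangements, or just split into the four sign cases for $(\chi(\g),\chi(\g R))\in\{\pm\}\times\{\pm\}$ and check equality in each. The second product $A(R,\g R)A(\g,R)$ is handled the same way: $A(\g,R)=(1,-1)\bigl(\tfrac{\chi(\g R)}{\chi(\g)},-\chi(\g R)\bigr)=\bigl(\tfrac{\chi(\g R)}{\chi(\g)},-\chi(\g R)\bigr)$ and $A(R,\g R)=\bigl(\chi(\g R),\,\tfrac{\chi(\g R)}{-\chi(\g R)}\bigr)=\bigl(\chi(\g R),-1\bigr)$ after using $\chi(R\g R)=\chi(\g R)$ and $\chi(R)=1$; their product again reduces to $-\bigl(\chi(\g),\chi(\g R)\bigr)$ by the same case analysis.

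The main obstacle is purely organizational rather than conceptual: one must be careful that $\chi$ is \emph{not} a homomorphism, so the only algebraic inputs available are $\chi(R)=1$, $\chi(R\g)=\chi(\g)$ (because multiplying on the left by $R$ only changes the sign of the entries in the top row, leaving the bottom row—and hence $\chi$—untouched), and $\chi(\g R)=\pm\chi(\g)$ with a sign that depends on whether the relevant entry of $\g$ was negated; but since the Hilbert symbol only sees signs, this sign ambiguity is exactly what makes $B(\g)=(\chi(\g),\chi(\g R))$ the natural object to land on. I would therefore organize the proof as: (i) record the three $\chi$-identities, (ii) simplify each of the four cocycle values to a single Hilbert symbol times $\pm1$, (iii) multiply and invoke bimultiplicativity of $(\,\cdot\,,\,\cdot\,)$ together with $(a,-1)(a,-1)=(a,1)=1$ to collapse to $-(\chi(\g),\chi(\g R))$. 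A sanity check on a couple of explicit matrices (say $\g=T$ and $\g=S$) at the end would confirm the sign conventions.
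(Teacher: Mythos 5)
Your computation of the first product $A(R,\g)A(R\g,R)$ is correct and essentially identical to the paper's: $A(R,\g)=(\chi(\g),-1)$ and $A(R\g,R)=(-1)\bigl(\chi(\g)\chi(\g R),-\chi(\g R)\bigr)$, and the resulting Hilbert-symbol identity $(u,-1)(uv,-v)=(u,v)$ with $u=\chi(\g)$, $v=\chi(\g R)$ holds for all nonzero $u,v$ (your bimultiplicativity argument, using $(v,-v)=1$, is if anything slightly cleaner than the paper's case split on $c=0$ versus $c\neq 0$).

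The gap is in your treatment of the second product, where you make two sign errors that do not cancel. First, in $A(\g,R)$ the second slot of the Kubota factor is $\chi(\g R)/(\chi(R)\det(\g))=\chi(\g R)$, not $-\chi(\g R)$: the determinant appearing there is $\det(\g)=1$, not $\det(R)$. Second, you drop the prefactor $(\det(R),\det(\g R))=(-1,-1)=-1$ from $A(R,\g R)$, even though your opening paragraph correctly flags that this factor contributes $-1$ there. With your expressions the product is $(\chi(\g R),-1)\bigl(\chi(\g)\chi(\g R),-\chi(\g R)\bigr)$, which for $\g=T$ (where $\chi(T)=\chi(TR)=1$) evaluates to $+1$ rather than the required $(-1)B(T)=-1$; so the proposed sanity check would in fact expose the error rather than ``confirm the sign conventions.'' The corrected expansion $A(R,\g R)A(\g,R)=(-1)(\chi(\g R),-1)\bigl(\chi(\g)\chi(\g R),\chi(\g R)\bigr)$ does reduce to $(-1)\bigl(\chi(\g),\chi(\g R)\bigr)$, so the lemma survives; but the cleanest repair --- and the route the paper takes --- is to note that $A(R,\g)A(R\g,R)=A(R,\g R)A(\g,R)$ is just the cocycle identity (\ref{eqn:cocycleidentityA}) with $(\A,\B,\g)=(R,\g,R)$, which makes the second computation unnecessary.
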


\begin{proof}
The first equality is a special case of (\ref{eqn:cocycleidentityA}) so we just check that $A(R,\g)A(R\g,R)=(-1)B(\g)$.
Using $\chi(R\gamma)=\chi(\gamma)$ and the hypothesis that $\det(\g)=1$ we have 
\begin{gather}
\begin{split}
A(R,\g)A(R\g,R)=&
(-1,1)\left(\frac{\chi(R\g)}{\chi(R)},\frac{\chi(R\gamma)}{\chi(\g)\det (R)}\right)\\
& \times
(-1,-1)\left(\frac{\chi(R\g R)}{\chi(R\g)},\frac{\chi(R\g R)}{\chi(R)\det (R\g)} \right)\\
=&
(-1)\left({\chi(\g)},-1\right)
\left({\chi(\g)}{\chi(\g R)},-{\chi(\g R)} \right).
\end{split}
\end{gather}
So we require to verify that 
\begin{align}
\label{eqn:chigammachiRgammaR}
(\chi(\g),-1)({\chi(\g)} {\chi(\g R)},-\chi(\g R))
=
(\chi(\g),\chi(\g R)).
\end{align}
Observe that if $\g=\left(\begin{smallmatrix}*&*\\c&d\end{smallmatrix}\right)$ then $\g R=\left(\begin{smallmatrix}*&*\\-c&d\end{smallmatrix}\right)$. So if $c\neq 0$ then the identity~\eqref{eqn:chigammachiRgammaR} becomes $(c,-1)(-c^2,c)=(c,-c)$ which is true since $(-c^2,c)=(c,-1)$ and $(c,-c)=1$. If $c=0$ then~\eqref{eqn:chigammachiRgammaR} becomes $(d,-1)(d^2,-d)=(d,d)$ which is true because $(d^2,-d)=1$ and $(d,-1)=(d,d)$. This proves the lemma.
\end{proof}

\begin{lem}
\label{lem:RgammaR}
For $\g\in {\SL_2}(\Z)$ and $\e\in\{\pm1\}$ we have $\WT R[\g,\e]\WT R^{-1} = [R\g R, B(\g)\e]$.
\end{lem}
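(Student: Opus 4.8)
The plan is to compute $\WT R[\g,\e]\WT R^{-1}$ directly from the multiplication rule~\eqref{eqProd}, reducing everything to cocycle values that Lemma~\ref{lem:AAB} already controls. First I would note that since $\WT R = [R,1]$ and $R^2 = I$, the inverse $\WT R^{-1}$ is of the form $[R,\nu]$ for some $\nu\in\{\pm1\}$; I would pin down $\nu$ by imposing $\WT R[R,\nu] = [I,1]$, which via~\eqref{eqProd} amounts to $A(R,R)\nu = 1$. Evaluating $A(R,R)$ from~\eqref{eqAtwist} using $\chi(R^2)=\chi(I)=1$, $\chi(R)=1$, and $\det(R)=-1$ gives $A(R,R) = (-1,-1)\bigl(\tfrac{1}{1},\tfrac{1}{1\cdot(-1)}\bigr) = (-1)(1,-1) = -1$, so $\nu = -1$ and $\WT R^{-1} = [R,-1]$. (I would also record $R\g R\in\SL_2(\Z)$, so the right-hand side of the claimed identity is a well-formed element, and $B(\g)\in\{\pm1\}$ by definition~\eqref{eqn:betagamma}.)

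Next I would carry out the two-step product. Applying~\eqref{eqProd} once,
\[
\WT R[\g,\e] = [R,1][\g,\e] = [R\g,\,A(R,\g)\e],
\]
and then applying it again,
\[
[R\g,\,A(R,\g)\e][R,-1] = [R\g R,\,A(R\g,R)\,A(R,\g)\,\e\,(-1)].
\]
So the second coordinate is $-A(R,\g)A(R\g,R)\e$, and the claim reduces precisely to the identity $-A(R,\g)A(R\g,R) = B(\g)$, i.e. $A(R,\g)A(R\g,R) = -B(\g) = (-1)B(\g)$. But this is exactly the content of Lemma~\ref{lem:AAB}, so the computation closes.

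There isn't really a hard step here once Lemma~\ref{lem:AAB} is in hand; the only thing requiring care is bookkeeping — getting the order of arguments in the cocycle right in the second application of~\eqref{eqProd} (it is $A(R\g,R)$, not $A(R,R\g)$), and correctly computing the sign $\nu=-1$ in $\WT R^{-1}=[R,-1]$, since an error there would flip the final sign. I would double-check the $\nu$ computation by verifying consistency with the stated fact that $\WT R^2 = [I,-1]$: indeed $\WT R^2 = [R,1][R,1] = [I,A(R,R)] = [I,-1]$, which matches, and hence $\WT R^{-1} = \WT R\cdot[I,-1]^{-1} = [R,1][I,-1] = [R,-1]$ as claimed. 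With those two sign checks in place, the lemma follows immediately.
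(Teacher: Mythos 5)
Your proof is correct and follows essentially the same route as the paper's: expand $\WT R[\g,\e]\WT R^{-1}=[R,1][\g,\e][R,-1]$ via the multiplication rule to get $[R\g R,\,-A(R,\g)A(R\g,R)\e]$ and then invoke Lemma~\ref{lem:AAB}. The only difference is that you explicitly verify $\WT R^{-1}=[R,-1]$ by computing $A(R,R)=-1$, a detail the paper leaves implicit; this is a welcome (and correct) sanity check rather than a divergence.
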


\begin{proof}
We have
\begin{gather}
\begin{split}
\WT R[\g,\e]\WT R^{-1}&=[R,1][\g,\e][R,-1]\\
&=[R\g,A(R,\g)\e][R,-1]\\
&=[R\g R,A(R,\g)A(R\g,R)(-\e)].
\end{split}
\end{gather}
So the claimed identity follows from Lemma \ref{lem:AAB}.
\end{proof}

\begin{lem}
\label{LemI2}
The element $\WT R\in\widetilde{\GL_2}(\Z)$ conjugates any preimage of $S$ or $T$ to its inverse in $\widetilde{\GL_2}(\Z)$.
\end{lem}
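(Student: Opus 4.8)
The plan is to deduce the statement directly from Lemma~\ref{lem:RgammaR}. Since $S,T\in\SL_2(\Z)$, that lemma applies and gives, for $\e\in\{\pm1\}$,
\begin{align*}
\WT R[S,\e]\WT R^{-1}=[RSR,\,B(S)\e],\qquad \WT R[T,\e]\WT R^{-1}=[RTR,\,B(T)\e].
\end{align*}
As $\e$ ranges over $\{\pm1\}$ the pairs $[S,\e]$ (resp.\ $[T,\e]$) run through all preimages of $S$ (resp.\ of $T$) in $\widetilde{\GL_2}(\Z)$, so it is enough to evaluate these two right-hand sides and compare them with the inverses $[S,\e]^{-1}$ and $[T,\e]^{-1}$.

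The verification then breaks into three routine steps. First, multiplying $2\times 2$ matrices gives $RSR=\left(\begin{smallmatrix}0&1\\-1&0\end{smallmatrix}\right)=S^{-1}$ and $RTR=\left(\begin{smallmatrix}1&-1\\0&1\end{smallmatrix}\right)=T^{-1}$. Second, feeding the values $\chi(S)=1$, $\chi(SR)=-1$, $\chi(T)=1$, $\chi(TR)=1$ (read off from~\eqref{eqn:Kubotachi}) into the definition~\eqref{eqn:betagamma} of $B$ and the Hilbert symbol~\eqref{eqn:Hilbertsymbol} yields $B(S)=(1,-1)=1$ and $B(T)=(1,1)=1$; hence $\WT R[S,\e]\WT R^{-1}=[S^{-1},\e]$ and $\WT R[T,\e]\WT R^{-1}=[T^{-1},\e]$. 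Third, from the product rule~\eqref{eqProd} one has $[S,\e]^{-1}=[S^{-1},A(S,S^{-1})\e]$ and $[T,\e]^{-1}=[T^{-1},A(T,T^{-1})\e]$, and substituting the same $\chi$-values together with $\det S=\det T=1$ into~\eqref{eqAtwist} gives $A(S,S^{-1})=1$ and $A(T,T^{-1})=1$. Comparing the two computations shows $\WT R[S,\e]\WT R^{-1}=[S,\e]^{-1}$ and $\WT R[T,\e]\WT R^{-1}=[T,\e]^{-1}$, which is the assertion.

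I do not expect a genuine obstacle: every ingredient is an elementary matrix product or a Hilbert-symbol evaluation. The one point that needs care is that the statement lives in the double cover $\widetilde{\GL_2}(\Z)$ rather than in $\GL_2(\Z)$, so the matrix identities $RSR=S^{-1}$, $RTR=T^{-1}$ by themselves are not sufficient; one must also check that the sign twist $B(\g)$ produced by conjugation matches the twist $A(\g,\g^{-1})$ carried by the inverse, and the lemma holds on the nose (not merely modulo the central element $[I,-1]$) precisely because both of these happen to be trivial for $\g\in\{S,T\}$. As a minor simplification, since $[I,-1]$ is central of order two it would in fact suffice to treat the distinguished preimages $\WT S=[S,1]$ and $\WT T=[T,1]$, the remaining preimages $[I,-1]\WT S$ and $[I,-1]\WT T$ then following automatically.
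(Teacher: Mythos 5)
Your proof is correct and follows essentially the same route as the paper: both reduce the statement to Lemma~\ref{lem:RgammaR}, compute $B(S)=B(T)=1$, and check that the cocycle twist $A(\g,\g^{-1})$ carried by the inverse is trivial for $\g\in\{S,T\}$ (the paper phrases this as verifying $[S,1][-S,1]=[I,1]$ and $[T,1][RTR,1]=[I,1]$, which is the same computation). The only cosmetic difference is that you carry the sign $\e$ through explicitly for both preimages, whereas the paper treats only $[S,1]$ and $[T,1]$; as you correctly note, the remaining preimages follow from the centrality of $[I,-1]$.
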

\begin{proof}
We have $B(S)=(\chi(S),\chi(RSR))=(1,-1)=1$ so $\WT R[ S,1]\WT R^{-1}=[RSR,1]=[-S,1]$ by Lemma \ref{lem:RgammaR} and $[ S,1]^{-1}=[-S,1]$ because $[S,1][-S,1]=[I,A(S,-S)]=[I,1]$. To verify that $\WT R [ T,1]\WT R^{-1}=[ T,1]^{-1}$ we compute $B(T)=(\chi(T),\chi(RTR))=(1,1)=1$, which gives $\WT R[ T,1]\WT R^{-1}=[RTR,1]$ according to Lemma \ref{lem:RgammaR}. Then we verify that $[T,1][RTR,1]=[I,A(T,RTR)]=[I,1]$.
\end{proof}
Lemma \ref{LemI2} shows that $\WT R$ conjugates $\WT S^2=\WT Z$ to its inverse. Since $\WT Z$ is not self-inverse this confirms that $\WT Z$ is not central in $\widetilde{\GL_2}(\Z)$.

\section{Functions on the Double Half-Plane}
\label{sec:fnsdblhlfpln}

To formulate modular forms on the double half-plane we require an action of $\widetilde{\GL_2}(\Z)$ on $\OO(\HH^*)$. To achieve this we first extend the action of $\widetilde{\SL_2}(\Z)$ to $\OO(\HH^*)$ as follows. Choose an embedding $\iota:\widetilde{\SL_2}(\Z)\to\widetilde{\GL_2}(\Z)$ that lifts the natural embedding $\SL_2(\Z)\to \GL_2(\Z)$ (i.e. such that $\iota(\A,\phi)=[\B,\e]$ implies $\A=\B$). Then for each $\g\in \SL_2(\Z)$ let $\phi_{\g}^+$ be the function $\HH^+\to\C$ such that $(\g,\phi_{\g}^+)$ is the preimage of $[\g,1]$ under $\iota$. The multiplication rule~\eqref{eqProd} implies $(\A,\phi^+_{\A})(\B,\phi^+_{\B})=(\A\B,A(\A,\B)\phi^+_{\A\B})$, so by~\eqref{eqn:tildesl2mult} we have
\begin{align}
\label{eqP1}
	\phi^+_\A(\B z)\phi^+_{\B}(z) = A(\A,\B)\phi^+_{\A\B}(z)~.
\end{align}
Next, given $\g\in \SL_2(\Z)$ define $\phi_{\g}^-:\HH^-\to \C$ by setting
\begin{align}
\label{eqP2}
	\phi_{\g}^-(z):=B(\g) \phi_{R\g R}^+(-z).
\end{align}
A function $f\in \mathcal{O}(\HH^*)$ is determined by its restrictions $f^\pm:=f|_{\HH^\pm}$ to the upper and lower half-planes. So we may define $f|_k[\gamma,\e]$ for $f\in\mathcal{O}(\HH^*)$ and $k\in \frac12\Z$ and $[\g,\e]\in \widetilde{\SL_2}(\Z)$ by requiring that
\begin{gather}\label{eqn:actionoftildesl2}
	(f|_k[\gamma,\e])^\pm(z)=f^\pm(\gamma z)(\e\phi_\gamma^\pm(z))^{-2k}.
\end{gather}

An extension of this to an action of $\widetilde{\GL_2}(\Z)$ on $\mathcal{O}(\HH^*)$ is now uniquely determined by the choice of action of $\WT R$, because once $f|_k\WT R$ has been suitably defined the action of any other element $\WT \g =[\g,\e]\in \widetilde{\GL_2}(\Z)$ with $\det(\g)=-1$ must satisfy $f|_k\WT\g=(f|_k\WT R)|_k(\WT R^{-1}\WT \g)$, and the action of $\WT R^{-1}\WT\g=[R\g,-A(R,\g)\e]$ has been determined already by (\ref{eqn:actionoftildesl2}) since $\det(R\g)=1$. The choice that we propose is
\begin{gather}\label{eqn:actionofR}
	(f|_k\WT R)(z)=i^{2k}f(-z)
\end{gather}
for $f\in \mathcal{O}(\HH^*)$. We will confirm momentarily (see Proposition \ref{prop:itisanaction}) that this choice leads to a consistent action, and is essentially the unique choice (see Proposition \ref{prop:uniquenessofRaction}) that can be made. 

As we have explained, the rule (\ref{eqn:actionofR}) leads us to define
\begin{gather}\label{eqn:actionofnegdetgamma}
	f|_k[\g,\e] :=i^{2k}(f\circ R)|_k[R\g,-A(R,\g)\e] 
\end{gather}
for $[\gamma,\e]\in \widetilde{\GL_2}(\Z)$ with $\det(\gamma)=-1$. Using this and (\ref{eqn:actionoftildesl2}) we obtain the following explicit prescription for the weight $k$ action of $[\g,\e]\in\widetilde{\GL_2}(\Z)$ on a function $f\in \mathcal{O}(\HH^*)$ for $k\in\frac12\Z$ in terms of $A$, $B$, and the $\phi^+_\g$ for $\g\in\SL_2(\Z)$. 
\begin{gather}\label{eqn:actionoftildegl2summary}
\begin{split}
	(f|_k[\g,\e])^+(z)&:=\begin{cases}
		f^+(\g z)(\e \phi^+_\g(z))^{-2k}&\text{ if }\det(\g)=+1,\\
		f^-(\g z)(i\e A(R,\g)\phi^+_{R\g}(z))^{-2k}&\text{ if }\det(\g)=-1,
	\end{cases}\\
	(f|_k[\g,\e])^-(z)&:=\begin{cases}
		f^-(\g z)(\e B(\g)\phi^+_{R\g R}(-z))^{-2k}&\text{ if }\det(\g)=+1,\\
		f^+(\g z)(i\e A(R,\g)B(R\g)\phi^+_{\g R}(-z))^{-2k}&\text{ if }\det(\g)=-1.
	\end{cases}
\end{split}
\end{gather}

\begin{prop}\label{prop:itisanaction}
The rule~\eqref{eqn:actionoftildegl2summary} defines an action of $\widetilde{\GL_2}(\Z)$ on $\OO(\HH^*)$. 
\end{prop}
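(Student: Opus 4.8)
The plan is to verify the two axioms for a (right) group action: that the identity element $[I,1]$ acts trivially, and that $f|_k([\A,\e][\B,\d]) = (f|_k[\A,\e])|_k[\B,\d]$ for all $[\A,\e],[\B,\d]\in\widetilde{\GL_2}(\Z)$. The first axiom is immediate from~\eqref{eqn:actionoftildegl2summary}, since $\det(I)=+1$, $\phi^+_I(z)\equiv 1$ by~\eqref{eqP1} with $\A=\B=I$, and $B(I)=(\chi(I),\chi(R))=(1,-1)=1$. The associativity axiom is where the work lies, and the natural strategy is to reduce to cases according to the signs $\det(\A),\det(\B)\in\{\pm1\}$, exploiting what we already know. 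When $\det(\A)=\det(\B)=+1$ the claim is just the statement that~\eqref{eqn:actionoftildesl2} defines an action of $\widetilde{\SL_2}(\Z)$ on $\OO(\HH^*)$; on $\HH^+$ this follows from the cocycle relation~\eqref{eqP1} exactly as for the classical weight-$k$ action, and on $\HH^-$ it follows from the same relation after noting that~\eqref{eqP2} and the cocycle identity for $B$ (a consequence of Lemma~\ref{lem:AAB} and~\eqref{eqn:cocycleidentityA}) make $z\mapsto B(\g)\phi^+_{R\g R}(-z)$ a cocycle for the $\HH^-$ action.

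Next I would handle the mixed and negative-determinant cases by leveraging the defining formula~\eqref{eqn:actionofnegdetgamma}, which was precisely engineered so that $f|_k[\g,\e] = (f|_k\WT R)|_k(\WT R^{-1}[\g,\e])$ holds whenever $\det(\g)=-1$, granting the $\WT R$-action the value $i^{2k}(f\circ R)$. Concretely, given $[\A,\e]$ with $\det(\A)=-1$, write $[\A,\e]=\WT R\cdot[R\A,\e']$ with $\e'=-A(R,\A)\e$ and $[R\A,\e']\in\widetilde{\SL_2}(\Z)$; then for any $[\B,\d]$ the product $[\A,\e][\B,\d] = \WT R\,[R\A,\e'][\B,\d]$, and one reduces the associativity of the triple $(\WT R,[R\A,\e'],[\B,\d])$ to the $\widetilde{\SL_2}(\Z)$-case already established together with a single clean identity: that $f\mapsto i^{2k}(f\circ R)$ intertwines the $\widetilde{\SL_2}(\Z)$-action with itself twisted by conjugation by $\WT R$, i.e. $(i^{2k}(f\circ R))|_k[\g,\e] = i^{2k}\big((f|_k(\WT R^{-1}[\g,\e]\WT R))\circ R\big)$ for $[\g,\e]\in\widetilde{\SL_2}(\Z)$. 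This last identity is where Lemma~\ref{lem:RgammaR} enters: it identifies $\WT R^{-1}[\g,\e]\WT R$ explicitly as $[R\g R, B(R\g R)\e]$ (equivalently $[R\g R,B(\g)\e]$, using $B(\g)=B(R\g R)$), and one then checks the factors of automorphy match by substituting~\eqref{eqP2} and the relation $\phi^+_\g(-z)$-type bookkeeping, together with the fact that $i^{2k}\cdot i^{2k}=i^{4k}=(-1)^{2k}=(\pm1)^{-2k}$ accounts for $\WT R^2=[I,-1]$ acting as $f\mapsto(-1)^{2k}f$, consistent with $(f|_k[I,-1])(z)=f(z)(-1)^{-2k}$.

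The main obstacle I expect is precisely the careful bookkeeping of the multiplier (automorphy) factors in the mixed case: one must confirm that the Hilbert-symbol prefactors $A(R,\g)$ and $B(\g)$ appearing in~\eqref{eqn:actionoftildegl2summary} combine correctly under composition, i.e. that the cocycle $A$ restricted along $R$ and the function $B$ satisfy the compatibility relations dictated by Lemmas~\ref{lem:AAB}--\ref{lem:RgammaR}, so that no stray sign survives. A useful organizing device is to treat the two half-planes $\HH^+$ and $\HH^-$ symmetrically: the map $z\mapsto -z$ intertwines $\HH^+$ and $\HH^-$ and conjugates the $\GL_2(\Z)$-action by $R$, so every identity on $\HH^+$ has a mirror on $\HH^-$ and it suffices to verify, say, the $\HH^+$-component of each case, the $\HH^-$-component following formally. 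With that reduction in place the remaining verification is a finite check over the four sign combinations, each amounting to a short manipulation of the cocycle identity~\eqref{eqn:cocycleidentityA} and the identities for $B$ proved above; none is conceptually hard, but the $\det(\A)=\det(\B)=-1$ case (where the two $i^{2k}$ factors and the two $B$-factors must conspire to reproduce the $\det(\A\B)=+1$ formula) is the one to watch.
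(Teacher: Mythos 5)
Your proposal is correct, but it organizes the negative-determinant cases differently from the paper. You both begin the same way: the $\det(\A)=\det(\B)=+1$ case splits into the classical computation on $\HH^+$ and, on $\HH^-$, the identity $A(\A,\B)B(\A\B)=A(R\A R,R\B R)B(\A)B(\B)$, which the paper isolates as Lemma~\ref{lem:AAequalsBBB}; your attribution of this to Lemma~\ref{lem:AAB} plus the cocycle identity~\eqref{eqn:cocycleidentityA} is accurate (the paper derives it by conjugating $[\A,\e][\B,\d]$ by $\WT R$ in two ways, using Lemma~\ref{lem:RgammaR}). Where you diverge is in the remaining cases: the paper checks the three sign combinations $(\mp,\pm)$ and $(-,-)$ one at a time, each time reducing to an explicit relation among $A$ and $B$ that is then verified from Lemmas~\ref{lem:AAB} and~\ref{lem:AAequalsBBB} and~\eqref{eqn:cocycleidentityA}. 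You instead invoke the standard mechanism for extending a right action across an index-two subgroup: once the $\widetilde{\SL_2}(\Z)$-action is in place, it suffices that the operator $T\colon f\mapsto i^{2k}(f\circ R)$ satisfies the intertwining relation $(Tf)|_k\WT\g = T\bigl(f|_k(\WT R\WT\g\WT R^{-1})\bigr)$ for $\WT\g\in\widetilde{\SL_2}(\Z)$ and that $T^2$ agrees with the action of $\WT R^2=[I,-1]$. Both checks go through: the intertwining identity reduces, via Lemma~\ref{lem:RgammaR} and~\eqref{eqP2}, to $B(R\g R)=B(\g)$ (true since $\chi(R\g)=\chi(\g)$ and the Hilbert symbol is symmetric), and $i^{4k}=(-1)^{2k}$ handles $\WT R^2$. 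Your route buys economy --- one clean identity replaces three separate cocycle verifications --- at the cost of leaving the formal reduction of the four sign cases to these two facts as an exercise; the paper's route is longer but fully explicit. Two trivial slips that do not affect anything: $\chi(R)=1$ (not $-1$, since $c=0$ and $d=1$ for $R$), and your phrase ``the cocycle identity for $B$'' under-describes the needed relation, which couples $A$, $B$, and the conjugation of $A$ by $R$ rather than involving $B$ alone.
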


Before proving Proposition \ref{prop:itisanaction} we record one more useful identity for the cocycle (\ref{eqAtwist}).
\begin{lem}
\label{lem:AAequalsBBB}
For $\A,\B\in \SL_2(\Z)$ we have $A(\A,\B)A(R\A R,R\B R)= B(\A) B(\B) B(\A\B)$.
\end{lem}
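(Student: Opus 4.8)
The plan is to prove the identity $A(\A,\B)A(R\A R,R\B R)= B(\A) B(\B) B(\A\B)$ by combining two applications of the cocycle identity \eqref{eqn:cocycleidentityA} with Lemma \ref{lem:AAB}, reducing everything to manipulations of Hilbert symbols built from $\chi$. First I would introduce the auxiliary element $W:=\A\B\in\SL_2(\Z)$ and apply the cocycle identity twice: once to the triple $(R\A, R, \B R)$ (or a similar regrouping containing $R\A R\cdot R\B R = R\A\B R$) so as to express $A(R\A R, R\B R)$ in terms of cocycle values of the form $A(R,\cdot)$, $A(\cdot,R)$, and $A(\A,\B)$. Concretely, writing $R\A R\cdot R\B R = R\A\B R$ and expanding $A(R\A,R)A(R\A R, R\B R)A(?,?)$ via associativity should let me isolate $A(R\A R,R\B R)$ as a product of $A(\A,\B)^{\pm 1}$ together with cocycle values $A(R,\g)A(R\g,R)$ for $\g\in\{\A,\B,\A\B\}$. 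Since $A$ takes values in $\{\pm1\}$, inverses are harmless.

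The second step is to invoke Lemma \ref{lem:AAB}, which gives $A(R,\g)A(R\g,R) = A(R,\g R)A(\g,R) = -B(\g)$ for each $\g\in\SL_2(\Z)$. Applying this to $\g=\A$, $\g=\B$, and $\g=\A\B$ converts the cocycle-value factors produced in step one into $(-1)^3 B(\A)B(\B)B(\A\B)$, with possibly an extra sign or an extra factor of $A(\A,\B)$ that I will need to track. Since $A(\A,\B)^2=1$, any leftover $A(\A,\B)$ factor either cancels with the $A(\A,\B)$ already on the left-hand side or combines to give the remaining $A(\A,\B)$; the bookkeeping of how many copies of $A(\A,\B)$ appear, and the total sign, is what the regrouping in step one is designed to control. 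The target has no explicit sign and one factor of $A(\A,\B)$ on the left, so I expect the three $(-1)$'s from the three uses of Lemma \ref{lem:AAB} to combine into a single $-1$ that must be absorbed; this forces a particular choice of regrouping, and checking that such a choice exists is the crux.

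A cleaner alternative, which I would pursue in parallel as a sanity check or fallback, is to verify the identity directly by cases on whether $c_\A$ (the lower-left entry of $\A$), $c_\B$, and $c_{\A\B}$ vanish, exactly as in the proof of Lemma \ref{lem:AAB}. Using $\chi(R\g)=\chi(\g)$ and the observation that conjugation by $R$ sends $\left(\begin{smallmatrix}a&b\\c&d\end{smallmatrix}\right)$ to $\left(\begin{smallmatrix}a&-b\\-c&d\end{smallmatrix}\right)$, so that $\chi(R\g R)=\chi(\g)$ when $c\neq 0$ and $\chi(R\g R)=\chi(\g)$ also when $c=0$ (both cases give $\chi(R\g R)=\chi(\g)$), one finds $A(R\A R, R\B R)$ has the same $\det$-factor $(\det\A,\det\B)=1$ and a $\chi$-factor $\left(\frac{\chi(\A\B)}{\chi(\A)},\frac{\chi(\A\B)}{\chi(\B)}\right)$ that differs from the $\chi$-factor of $A(\A,\B)$ only through the replacements $c_\A\mapsto -c_\A$, $c_\B\mapsto -c_\B$, $c_{\A\B}\mapsto -c_{\A\B}$. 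The ratio $A(\A,\B)A(R\A R,R\B R)$ is then an explicit product of Hilbert symbols in the signs of the $c$'s (or $d$'s), and one checks in each of the finitely many sign/vanishing cases that it equals $B(\A)B(\B)B(\A\B)$ using bilinearity of $(\cdot,\cdot)$ and the elementary facts $(x,-x)=1$, $(x,x)=(x,-1)$, $(-x^2,x)=(x,-1)$.

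The main obstacle I anticipate is not conceptual but organizational: keeping the signs straight. In the cocycle-identity approach the danger is miscounting how many factors of $A(\A,\B)$ and how many $-1$'s appear, since the answer depends sensitively on the order in which associativity is applied; in the case-analysis approach the danger is that the product of Hilbert symbols genuinely splits into several subcases (e.g. $c_\A=0$ but $c_\B\neq 0$, with $\A\B$ possibly having zero or nonzero lower-left entry) and each must be checked. I would therefore structure the final proof around the case analysis, since it is self-contained and parallels Lemma \ref{lem:AAB} closely, using the cocycle-identity route only to motivate why the identity should hold.
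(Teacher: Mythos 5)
Your first route is, in substance, the paper's proof: the identity is exactly the statement that the two ways of computing $\WT R[\A,\e][\B,\d]\WT R^{-1}$ in $\widetilde{\GL_2}(\Z)$ --- multiply first and then conjugate, versus conjugate each factor and then multiply --- agree, and the three applications of Lemma \ref{lem:AAB} you anticipate are precisely the three uses of Lemma \ref{lem:RgammaR} (once for $\A\B$ on one side, once each for $\A$ and $\B$ on the other). But the sign bookkeeping you flag as ``the crux'' is left unresolved, and it does need an idea: the three factors of $-1$ coming from $A(R,\g)A(R\g,R)=(-1)B(\g)$ are cancelled by the three occurrences of $\WT R^{-1}=[R,-1]$ (equivalently, by $A(R,R)=-1$), one in the ``multiply-then-conjugate'' computation and two in the ``conjugate-then-multiply'' computation, so that no leftover sign survives. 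Packaging the argument as a group computation via Lemma \ref{lem:RgammaR}, rather than as a raw cocycle manipulation, is what makes this cancellation automatic; without that packaging your sketch does not yet close.

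The fallback route you say you would actually write up starts from a false identity. For $\g=\left(\begin{smallmatrix}a&b\\c&d\end{smallmatrix}\right)$ one has $R\g R=\left(\begin{smallmatrix}a&-b\\-c&d\end{smallmatrix}\right)$, so $\chi(R\g R)=-\chi(\g)$ when $c\neq 0$ (and $=\chi(\g)=d$ only when $c=0$); your claim that $\chi(R\g R)=\chi(\g)$ in both cases is wrong, and it contradicts the $c\mapsto -c$ replacement rule you state in the same sentence. With the stated (incorrect) formula the $\chi$-factor of $A(R\A R,R\B R)$ would equal that of $A(\A,\B)$ and the left-hand side of the lemma would always be $1$, which is false: e.g.\ if $c_\A=0$ and $d_\A=-1$ while $c_\B\neq 0$, the right-hand side is $B(\A)=(d_\A,d_\A)=-1$. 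The case analysis is salvageable once $\chi(R\g R)=-\chi(\g)$ (for $c\neq0$) is used consistently, together with the relation $c_{\A\B}=c_\A a_\B+d_\A c_\B$ to control which of $c_\A,c_\B,c_{\A\B}$ can vanish simultaneously, but as written the computation is not carried out and its starting point is incorrect.
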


\begin{proof}
Let $\A,\B \in \SL_2(\Z)$ and choose preimages $[\A, \e]$ and $ [\B, \delta]$ in $\WT\SL_2(\Z)$. We compute the conjugate of $[\A, \e] [\B, \delta] $ under $\WT R$ in two different ways. On the one hand we have
\begin{gather}
\begin{split}\label{eqn:Rconjalphabeta1}
\WT R [\A, \e] [\B, \delta] \WT R^{-1} 
&= \WT R[\A \B , A(\A,\B) \e \d  ]\WT R^{-1} \\
&= [R \A \B R, A(\A,\B) B(\A\B) \e \delta ] .
\end{split}
\end{gather}
On the other hand,
\begin{gather}
\begin{split}\label{eqn:Rconjalphabeta2}
\WT R [\A, \e] [\B, \delta] \WT R^{-1} 
&= \WT R [\A, \e] \WT R^{-1} \WT R [\B, \delta] \WT R^{-1} \\
&=  [R\A R, B(\A)\e]  [R\B R, B(\B) \delta]\\
&= [R \A \B R, A(R\A R,R\B R) B(\A)B(\B) \e \delta ] .
\end{split}
\end{gather}
The equality of (\ref{eqn:Rconjalphabeta1}) and (\ref{eqn:Rconjalphabeta2}) proves the claim.
\end{proof}

\begin{proof}[Proof of Proposition \ref{prop:itisanaction}.]
For the sake of completeness we first check that~\eqref{eqn:actionoftildesl2} defines an action of $\widetilde{\SL_2}(\Z)$ on $\OO(\HH^*)$ for $k\in \frac12\Z$. Actually, the action on $\mathcal{O}(\HH^+)$ coincides with that specified by (\ref{eqn:fplustildegamma}) in the sense that $f|_k\iota(\gamma,\phi)=f|_k(\gamma,\phi)$ for $f\in \mathcal{O}(\HH^+)$ and $(\gamma,\phi)\in \widetilde{\SL_2}(\Z)$ by the definition of $\phi_\gamma^+$. To confirm that this defines an action of $\widetilde{\SL_2}(\Z)$ on $\mathcal{O}(\HH^+)$ we just need to check that for $\WT\A=(\A,\phi)$ and $\WT\B=(\B,\psi)$ in $\widetilde{\SL_2}(\Z)$ we have
\begin{align}
\label{eqn:tildegl2action_1}
 (f|_k \WT \A)|_k \WT\B
= 
f|_k (\WT\A\WT\B) 
\end{align} 
for $f\in \OO(\HH^+)$. To see this note that the RHS of~\eqref{eqn:tildegl2action_1} is $f(\A\B z)(\phi(\B z)\psi(z))^{-2k}$ because $\WT\A\WT\B=(\A\B,\phi(\B z)\psi(z))$ by~\eqref{eqn:tildesl2mult}, while the LHS is 
\begin{align}
\begin{split}
	((f|_k(\A,\phi))|_k(\B,\psi))(z)
	&=((f\circ \A) \phi^{-2k} |_k(\B,\psi))(z)\\
	&=f(\A\B z)\phi(\B z)^{-2k}\psi(z)^{-2k}.
\end{split}
\end{align}

Suppose now that $f\in \mathcal{O}(\HH^-)$. With $\WT\A,\WT\B$ as above let $\e,\delta\in \{\pm 1\}$ be determined by requiring that $\iota\WT\A= [\A,\e]$ and $\iota\WT\B=[\B,\delta]$, or equivalently $\phi=\e\phi^+_\A$ and $\psi=\delta\phi^+_\B$. Since $[\A,\e][\B,\delta]=[\A\B,A(\A,\B)\e\delta]$ we require to check that 
\begin{align}
\label{eqn:tildegl2action_2}
 (f|_k [\A,\e])|_k [\B,\delta]
= 
f|_k [\A\B,A(\A,\B)\e\delta] 
\end{align} 
The RHS of (\ref{eqn:tildegl2action_2}) is $f(\A\B z) (A(\A,\B)B(\A\B)\e\delta \phi_{R\A\B R}^+(Rz))^{-2k}$, whereas the LHS of (\ref{eqn:tildegl2action_2}) is
\begin{gather}
	((f\circ \A)(B(\A)\e(\phi^+_{R\A R}\circ R))^{-2k})|_k[\B,\delta]
	=f(\A\B z)(B(\A)\e\phi^+_{R\A R}(R\B z)B(\B)\delta\phi^+_{R\B R}(R z))^{-2k}.
\end{gather}
By (\ref{eqP1}) we have $\phi^+_{R\A R}(R\B z)\phi^+_{R\B R}(R z) =A(R\A R,R\B R)\phi^+_{R\A\B R}(R z)$, so the verification of (\ref{eqn:tildegl2action_2}) reduces to $A(\A,\B)B(\A\B)=A(R\A R, R\B R)B(\A)B(\B)$, which is just the content of Lemma \ref{lem:AAequalsBBB}.

We have verified that (\ref{eqn:actionoftildesl2}) defines an action of $\widetilde{\SL_2}(\Z)$ on $\mathcal{O}(\HH^*)$. We now check that (\ref{eqn:actionofnegdetgamma}) extends this to $\widetilde{\GL_2}(\Z)$. That is, we check that
\begin{gather}\label{eqn:groupactiongl2gl2}
	(f|_k[\A,\e])|_k[\B,\delta] = 
	f|_k[\A\B,A(\A,\B)\e\delta]
\end{gather}
for $f\in \mathcal{O}(\HH^*)$ and $[\A,\e],[\B,\delta]\in \widetilde{\GL_2}(\Z)$. We have already verified this for $\det(\A)=\det(\B)=1$. Suppose that $\det(\A)=-1$ and $\det(\B)=1$. Then for $f\in \mathcal{O}(\HH^*)$, using (\ref{eqn:actionofnegdetgamma}) we have 
\begin{gather}
\begin{split}\label{eqn:fslashdetminusonedetone}
(f|_k[\A,\e])|_k[\B,\delta]&=i^{-2k}((f\circ R)|_k[R\A,A(R,\A)\e])|_k[\B,\delta]\\
&=i^{-2k}(f\circ R)|_k[R\A\B,A(R,\A)A(R\A,\B)\e\delta]
\end{split}
\end{gather}
since $R\A\in\SL_2(\Z)$ and we have already verified that (\ref{eqn:actionoftildesl2}) defines an action of the subgroup $\widetilde{\SL_2}(\Z)<\widetilde{\GL_2}(\Z)$ on $\mathcal{O}(\HH^*)$. On the other hand 
\begin{gather}
f|_k[\A\B,A(\A,\B)\e\delta]=i^{-2k}(f\circ R)|_k[R\A\B,A(R,\A\B)A(\A,\B)\e\delta], 
\end{gather}
which agrees with (\ref{eqn:fslashdetminusonedetone}) because $A(R,\A\B)A(\A,\B)=A(R,\A)A(R\A,\B)$ is a special case of (\ref{eqn:cocycleidentityA}).

To check (\ref{eqn:groupactiongl2gl2}) in case $\det(\A)=1$ and $\det(\B)=-1$ let $f^\pm$ denote the restriction of $f\in\mathcal{O}(\HH^*)$ to $\HH^\pm$. Then on the one hand
\begin{gather}
\begin{split}\label{eqn:fslashdetonedetminusone_1}
((f|_k[\A,\e])|_k[\B,\delta])^\pm 
=(f^\mp\circ \A\B)(\e(\phi_\A^\mp\circ\B))^{-2k}(i\delta A(R,\B) \phi^\pm_{R\B})^{-2k},
\end{split}
\end{gather}
while on the other hand 
\begin{gather}\label{eqn:fslashdetonedetminusone_2}
(f|_k[\A\B,A(\A,\B)\e\delta])^\pm=i^{-2k}(f^\mp\circ \A\B)(A(\A,\B)\e\delta A(R,\A\B)\phi^\pm_{R\A\B})^{-2k}. 
\end{gather}
We have $(\phi^-_\A\circ\B)\phi^+_{R\B}=B(\A)(\phi^+_{R\A R}\circ R\B)\phi^+_{R\B}=B(\A) A(R\A R,R\B)\phi^+_{R\A\B}$. So to verify that (\ref{eqn:fslashdetonedetminusone_1}) and (\ref{eqn:fslashdetonedetminusone_2}) agree on $\HH^+$ it suffices to show that $A(R,\B)  A(R\A R,R\B ) B(\A)=A(\A ,\B) A(R,\A\B)$, or equivalently, 
	$A(R\A R,R\B ) 
	A(\A ,\B) 
	=
	A(R,\A\B)
	A(R,\B) 
	B(\A)$.
We require this identity for arbitrary $\A,\B\in \GL_2(\Z)$ with $\det(\A)=1$ and $\det(\B)=-1$. If we substitute $\B R$ for $\B$ we obtain 
\begin{gather}\label{eqn:AAAABequals1}
	A(R\A R,R\B R) 
	A(\A ,\B R) 
	=
	A(R,\A\B R)
	A(R,\B R) 
	B(\A),
\end{gather}
so we will have confirmed that (\ref{eqn:fslashdetonedetminusone_1}) and (\ref{eqn:fslashdetonedetminusone_2}) agree on $\HH^+$ once we verify (\ref{eqn:AAAABequals1}) for arbitrary $\A,\B\in \SL_2(\Z)$. To achieve this we apply Lemma \ref{lem:AAB} in the form $A(R,\g R)=(-1)A(\g,R)B(\g)$ with $\g=\A\B$ and $\g=\B$, and then apply Lemma \ref{lem:AAequalsBBB} to obtain
\begin{gather}
\begin{split}
	A(R,\A\B R)A(R, \B R) B(\A)&=A(\A\B,R)A(\B,R)B(\A) B(\B) B(\A\B)\\
	&=A(\A\B,R) A(\B,R) A(R\A R,R\B R) A(\A,\B)
\end{split}
\end{gather}
for the right-hand side of (\ref{eqn:AAAABequals1}). So (\ref{eqn:AAAABequals1}) follows from $A(\A,\B) A(\A\B,R) A(\B,R) =A(\A,\B R)$, which is just a special case of the cocycle identity (\ref{eqn:cocycleidentityA}). The verification that (\ref{eqn:fslashdetonedetminusone_1}) and (\ref{eqn:fslashdetonedetminusone_2}) agree on $\HH^-$ is similar and we leave it to the reader.

Finally we consider the case that $\det(\A)=\det(\B)=-1$. Under this assumption we have
\begin{gather}
\begin{split}\label{eqn:fslashdetminusonedetminusone_1}
((f|_k[\A,\e])|_k[\B,\delta])^\pm
=(-1)^{2k}(f^\pm\circ \A\B)(A(R,\A)\e(\phi_{R\A}^\mp\circ \B))^{-2k}(A(R,\B)\delta \phi^\pm_{R\B})^{-2k}
\end{split}
\end{gather}
on the one hand, and 
\begin{gather}\label{eqn:fslashdetminusonedetminusone_2}
(f|_k[\A\B,A(\A,\B)\e\delta])^\pm=(f^\pm\circ \A\B)(A(\A,\B)\e\delta \phi^\pm_{\A\B})^{-2k}
\end{gather} 
on the other. The verification that (\ref{eqn:fslashdetminusonedetminusone_1}) and (\ref{eqn:fslashdetminusonedetminusone_2}) agree is similar to the above so we just present details for the restriction to $\HH^-$. Proceeding as above we observe that 
\begin{gather}
(\phi^+_{R\A}\circ\B)\phi^-_{R\B}=B(R\B)(\phi^+_{R\A }\circ \B R\circ R)(\phi^+_{\B R}\circ R)=B(R\B) A(R\A,\B R)(\phi^+_{R\A\B R}\circ R),
\end{gather}
and conclude from this that (\ref{eqn:fslashdetminusonedetminusone_1}) and (\ref{eqn:fslashdetminusonedetminusone_2}) do agree in the $f^-$ case if it is true that $(-1)A(R,\A) A(R, \B)  A(R\A ,\B  R) B(R \B)=A(\A ,\B) B(\A\B)$ for $\det(\A)=\det(\B)=-1$. Rather than checking this we substitute $\A R$ for $\A$ and $R\B $ for $\B$, and check the resulting identity 
\begin{gather}\label{eqn:AAABequalsAB}
(-1)A(R,\A R) A(R, R\B)  A(R\A  R,R\B  R) B(\B)=A(\A  R,R\B) B(\A\B)
\end{gather}
for $\det(\A)=\det(\B)=1$. By Lemma \ref{lem:AAB} we have $(-1)A(R,\A R)= B(\A)A(\A,R)$. Using this and Lemma \ref{lem:AAequalsBBB} we rewrite (\ref{eqn:AAABequalsAB}) as
\begin{gather}
A(\A,R) A(R, R\B)  A(\A,\B) B(\A\B)=A(\A  R,R\B) B(\A\B),
\end{gather}
and verify it by observing that $A(\A,R)A(\A R, R\B)=A(\A, \B)A(R,R\B)$ is a consequence of the cocycle identity (\ref{eqn:cocycleidentityA}).
\end{proof}

We now verify that the weight $k$ action of $\widetilde{\GL_2}(\Z)$ on $\mathcal{O}(\HH^*)$ that we have defined in (\ref{eqn:actionoftildegl2summary}) is essentially the unique extension of the usual action of $\widetilde{\SL_2}(\Z)$ on $\mathcal{O}(\HH^+)$ such that $\WT R$ acts by mapping $f(z)$ to $\lambda f(-z)$ for some $\lambda\in \C$, for any  $f\in \mathcal{O}(\HH^*)$. 

\begin{prop}\label{prop:uniquenessofRaction}
Suppose that $k\in \frac12\Z$ and $(f,[\gamma,\e])\mapsto f|_k[\gamma,\e]$ is an action of $\widetilde{\GL_2}(\Z)$ on $\mathcal{O}(\HH^*)$ such that the restriction to $\widetilde{\SL_2}(\Z)$ recovers the usual action (\ref{eqn:fplustildegamma}) on $\mathcal{O}(\HH^+)$, and such that $f|_k\WT R=\lambda^{2k}(f\circ R)$ for some $\lambda\in \C$ when $f\in \mathcal{O}(\HH^*)$. Then the action of $\widetilde{\SL_2}(\Z)$ on $\mathcal{O}(\HH^*)$ is given by (\ref{eqn:actionoftildesl2}) and either $\lambda=i$ or $\lambda=-i$.
\end{prop}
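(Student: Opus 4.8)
The plan is to pin down the action of $\widetilde{\SL_2}(\Z)$ on $\OO(\HH^-)$ first, and then use the relation $\WT R[\g,\e]\WT R^{-1}=[R\g R,B(\g)\e]$ from Lemma \ref{lem:RgammaR} to transport the known action on $\OO(\HH^+)$ over to $\OO(\HH^-)$, which will both force the form (\ref{eqn:actionoftildesl2}) and constrain $\lambda$. Concretely, for $\g\in\SL_2(\Z)$ and $f\in\OO(\HH^*)$, the element $\WT R[R\g R,\e']\WT R^{-1}$ equals $[\g,B(R\g R)\e']=[\g,B(\g)\e']$ (using $B(R\g R)=B(\g)$, which follows from $\chi(R\g R)=\chi(\g R)$ and $\chi(R\g R\cdot R)=\chi(\g)$, i.e. swapping the two arguments of the Hilbert symbol); choosing $\e'$ suitably we get that $f|_k[\g,\e]$ restricted to $\HH^-$ is computed by first applying $\WT R$ (which sends $\HH^-$-data to $\HH^+$-data via $z\mapsto -z$), then the known $\widetilde{\SL_2}(\Z)$ action on $\OO(\HH^+)$, then $\WT R^{-1}$. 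Unwinding this composition, and writing $f|_k\WT R=\lambda^{2k}(f\circ R)$ so that $f|_k\WT R^{-1}=\lambda^{-2k}(f\circ R)$ (since $R^2=I$ and $\WT R^2$ acts trivially up to the central element — see below), yields precisely $(f|_k[\g,\e])^-(z)=f^-(\g z)(\e B(\g)\phi^+_{R\g R}(-z))^{-2k}$, matching (\ref{eqn:actionoftildesl2}) via (\ref{eqP2}). The $\lambda$-dependence cancels in this conjugation because $\WT R$ appears once with $+2k$ and once with $-2k$, so this step establishes the \emph{form} of the $\widetilde{\SL_2}(\Z)$ action but not yet the value of $\lambda$.

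To fix $\lambda$ I would use a relation in $\widetilde{\GL_2}(\Z)$ that expresses a power of $\WT R$ in terms of $\widetilde{\SL_2}(\Z)$. From the excerpt we know $\WT R^2=[I,A(R,R)]=[I,(\det R,\det R)]=[I,-1]$, the unique nontrivial central element, which by (\ref{eqn:actionoftildesl2}) acts on $\OO(\HH^*)$ as multiplication by $(-1)^{-2k}$ (applying (\ref{eqn:actionoftildesl2}) with $\g=I$, $\e=-1$, noting $\phi^+_I=1$ and $\phi^-_I=B(I)\phi^+_I(-z)=1$). On the other hand, applying the hypothesis twice, $f|_k\WT R^2=(\lambda^{2k}(f\circ R))|_k\WT R=\lambda^{2k}\cdot\lambda^{2k}((f\circ R)\circ R)=\lambda^{4k}f$ — here the key point is that $|_k\WT R$ sends $g$ to $\lambda^{2k}(g\circ R)$ regardless of which half-plane $g$'s argument lies in, so the second application simply squares the scalar and composes $R$ with itself. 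Equating gives $\lambda^{4k}=(-1)^{-2k}$ for all $k\in\frac12\Z$; taking $k=\frac12$ forces $\lambda^2=-1$, i.e. $\lambda=\pm i$.

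The main obstacle is making the conjugation argument in the first paragraph rigorous without circularity: one has to be careful that "the action of $\WT R$" is well-defined on all of $\OO(\HH^*)$ (not just in transporting $\HH^+$-data), and that the formula $f|_k\WT R=\lambda^{2k}(f\circ R)$ together with the $\widetilde{\SL_2}(\Z)$-action genuinely determines $f|_k[\g,\e]$ for $\det\g=1$ on the lower half-plane via the identity $[\g,\e]=\WT R\cdot(\WT R^{-1}[\g,\e])$ with $\WT R^{-1}[\g,\e]$ again of determinant $+1$ — so one is using associativity of the assumed action to rewrite $f|_k[\g,\e]=((f|_k\WT R)|_k(\WT R^{-1}[\g,\e]\WT R))|_k\WT R^{-1}$ or a similar bracketing. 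Choosing the bracketing so that all intermediate elements are either in $\widetilde{\SL_2}(\Z)$ (where the action is given) or equal to $\WT R^{\pm1}$ (where it is given by hypothesis) is the crux; Lemma \ref{lem:RgammaR} is exactly what guarantees $\WT R^{-1}[\g,\e]\WT R\in\widetilde{\SL_2}(\Z)$ when $\det\g=1$, so the bookkeeping should go through. A secondary check is verifying $B(R\g R)=B(\g)$, which is the small Hilbert-symbol computation noted above and is entirely routine given the identity $\chi(R\g)=\chi(\g)$ already used in the paper. Finally one records that once the $\widetilde{\SL_2}(\Z)$-action on $\OO(\HH^*)$ has the form (\ref{eqn:actionoftildesl2}), all the verifications needed for it to be a genuine action are already contained in the proof of Proposition \ref{prop:itisanaction}, so no further work is required there.
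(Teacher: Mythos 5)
Your argument is correct and follows essentially the same route as the paper's proof: conjugation by $\WT R$ via Lemma \ref{lem:RgammaR} transports the given $\widetilde{\SL_2}(\Z)$-action on $\OO(\HH^+)$ over to $\OO(\HH^-)$ with the $\lambda$-dependence cancelling, which forces (\ref{eqn:actionoftildesl2}), and then the relation $\WT R^2=[I,-1]$ together with $(f|_k[I,-1])^+=(-1)^{2k}f^+$ yields $\lambda^{4k}=(-1)^{2k}$ and hence $\lambda=\pm i$. The only phrase to adjust is ``taking $k=\tfrac12$'': the weight $k$ is fixed by hypothesis, so you must conclude from $\lambda^{4k}=(-1)^{2k}$ at that fixed $k$ (for half-integral $k$ this immediately gives $\lambda^{2k}=\pm i$), which is exactly how the paper closes the argument; your auxiliary verification $B(R\g R)=B(\g)$ is correct and a harmless addition.
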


\begin{proof}
The first part of the proof of Proposition \ref{prop:itisanaction} shows that the subgroup $\iota(\widetilde{\SL_2}(\Z))\simeq \widetilde{\SL_2}(\Z)$ acts in the usual way on $\mathcal{O}(\HH^+)$, and this is true independently of the choice of $\iota$. However $f|_k\WT R$ is defined, the action of $\widetilde{\SL_2}(\Z)$ on $\mathcal{O}(\HH^-)$ is subsequently determined by 
\begin{gather}
	f|_k[\g,\e]=((f|_k \WT R^{-1})|_k(\WT R[\g,\e]\WT R^{-1}))|_k\WT R
	=((f|_k\WT R^{-1})|_k [R\g R,B(\g)\e])|_k\WT R
\end{gather}
(cf. Lemma \ref{lem:RgammaR}). The reader may check that this leads to (\ref{eqn:actionoftildesl2}) when $f|_k\WT R=(\pm i)^{2k}(f\circ R)$. So suppose that we have an action of $\widetilde{\GL_2}(\Z)$ on $\mathcal{O}(\HH^*)$ such that $f|_k\WT R=\lambda^{2k}(f\circ R)$ for some $\lambda\in \C$. Since $\WT R^2=[I,-1]$ and $(f|_k[I,-1])^+=(-1)^{2k}f^+$ we must have $(-1)^{2k}f^+=((f|_k\WT R)|_k\WT R)^+=\lambda^{4k}(f^+\circ R^2)=\lambda^{4k}f^+$ for $k\in \frac12\Z$. This implies $\lambda=\pm i$, as we claimed.
\end{proof}

\section{Modular Forms on the Double Half-Plane}
\label{sec:mdlfmsgl2}

We are now ready to define and characterize modular forms for $\GL_2(\ZZ)$ (i.e. on the double half-plane). For this it is natural to work with vector-valued modular forms so we begin by recalling the definition for $\SL_2(\Z)$. Suppose that $\rho:\WT\SL_2(\Z)\to \GL(V)$ is a (complex) representation of $\WT\SL_2(\Z)$. That is, $V$ is a complex vector space and $\rho$ is a homomorphism of groups. Then for $k\in \frac12\Z$ a function $f:\HH^+\to  V$ is called a {\em (vector-valued) modular form} of weight $k$ for $\WT\SL_2(\Z)$ with representation $\rho$ if $f|_k\WT\g = \rho(\WT \g)f$ for every $\WT\g\in \WT\SL_2(\Z)$. Here $f|_k\WT\g$ is defined just as in (\ref{eqn:fplustildegamma}). That is, $f|_k\WT\gamma:=(f\circ \g)\phi^{-2k}$ for $\WT\g=(\g,\phi)$, or equivalently $f|_k\WT\gamma:=(f\circ \g)(\e\phi^+_\gamma)^{-2k}$ in the notation of (\ref{eqn:actionoftildesl2}), when $\WT\g=[\g,\e]=\iota(\g,\e\phi^+_\g)$. Usually $f$ is assumed to be {\em holomorphic}, meaning that $\lambda\circ f:\HH^+\to \C$ is holomorphic for every linear functional $\lambda:V\to \C$, and some condition is imposed on the growth of $(\lambda\circ f)(it)$ as $t\to\infty$. For example, $f$ is said to be a {\em holomorphic modular form} if $\lambda\circ f$ is holomorphic and $(\lambda\circ f)(it)=O(1)$ as $t\to\infty$ for every linear functional $\lambda$ on $V$, and is called a {\em weakly holomorphic modular form} if $\lambda\circ f$ is holomorphic and there exists $C>0$ such that $(\lambda\circ f)(it)=O(e^{Ct})$ as $t\to\infty$ for every linear functional $\lambda$ on $V$.

To formulate modular forms on the double half-plane we suppose that $\rho:\WT\GL_2(\Z)\to \GL(V)$ is a representation of $\WT\GL_2(\Z)$. Then we call a holomorphic function $f:\HH^*\to V$ a {\em (vector-valued) modular form} of weight $k$ for $\WT\GL_2(\Z)$ with representation $\rho$ if
\begin{gather}
f|_k\WT\g = \rho(\WT \g)f
\end{gather} 
for every $\WT\g\in \WT\GL_2(\Z)$, where the definition of $f|_k\WT\g$ is now given by (\ref{eqn:actionoftildegl2summary}).

Let $M_k(\WT\GL_2(\Z),\rho)$ denote the space of modular forms of weight $k$ for $\WT\GL_2(\Z)$ with representation $\rho$, and interpret $M_k(\WT\SL_2(\Z), \rho)$ analogously in case $\rho$ is a representation of $\WT\SL_2(\Z)$. Our characterization of modular forms for $\WT\GL_2(\Z)$ comes in two parts. To explain the first part suppose that $\rho$ is a representation of $\WT\GL_2(\Z)$. Define $\Res \rho:=\rho|_{\WT\SL_2(\Z)}$ and observe that the restriction $f\mapsto f|_{\HH^+}$ defines a map 
\begin{gather}\label{eqn:resmaponmodularforms}
\Res:M_k(\WT\GL_2(\Z),\rho)\to M_k(\WT\SL_2(\Z),\Res \rho)
\end{gather}
which we also denote by $\Res$. 
The first part of our characterization is that this map is an isomorphism. That is, if $\rho$ is a representation of $\WT\GL_2(\Z)$ then every modular form of weight $k$ and representation $\Res\rho$ for $\WT\SL_2(\Z)$ extends uniquely to a modular form of weight $k$ and representation $\rho$ for $\WT\GL_2(\Z)$, and every modular form of weight $k$ and representation $\rho$ for $\WT\GL_2(\Z)$ arises in this way. 

\begin{thm}\label{thm:resisom}
For $k\in \frac12\Z$ and $\rho:\WT\GL_2(\Z)\to \GL(V)$ a representation the restriction map $\Res:M_k(\WT\GL_2(\Z),\rho)\to M_k(\WT\SL_2(\Z),\Res\rho)$ is an isomorphism.
\end{thm}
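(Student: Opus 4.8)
The plan is to prove that $\Res$ is both injective and surjective, with the main content going into surjectivity. Throughout, fix $k\in\frac12\Z$ and a representation $\rho:\WT\GL_2(\Z)\to\GL(V)$, and write $\Res\rho=\rho|_{\WT\SL_2(\Z)}$.

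For injectivity, suppose $f\in M_k(\WT\GL_2(\Z),\rho)$ restricts to $0$ on $\HH^+$. Since $\WT R$ swaps the two half-planes, the relation $f|_k\WT R=\rho(\WT R)f$ together with the formula (\ref{eqn:actionofR}), namely $(f|_k\WT R)(z)=i^{2k}f(-z)$, forces $f$ to vanish on $\HH^-$ as well; more precisely, $f^-(z)=i^{-2k}\rho(\WT R)f^+(-z)=0$. Hence $f=0$ and $\Res$ is injective. (Equivalently, one can invoke the fact recorded before (\ref{eqn:actionofR}) that the action of any $\WT\g$ with $\det\g=-1$ is determined via $f|_k\WT\g=(f|_k\WT R)|_k(\WT R^{-1}\WT\g)$, so an element of $M_k(\WT\GL_2(\Z),\rho)$ is determined by its restriction to $\HH^+$.)

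For surjectivity, let $g\in M_k(\WT\SL_2(\Z),\Res\rho)$ be given; I want to extend it to $\HH^*$. The only possible extension, by the uniqueness just used, is to set $f^+:=g$ and $f^-(z):=i^{-2k}\rho(\WT R)g(-z)$ for $z\in\HH^-$ (so that the desired relation $f|_k\WT R=\rho(\WT R)f$ holds by construction — one should double-check both components of this using (\ref{eqn:actionoftildegl2summary}), which also uses $\rho(\WT R)^2=\rho([I,-1])=(-1)^{2k}\mathrm{id}$ coming from $\WT R^2=[I,-1]$). Holomorphy of $f$ on $\HH^-$ is immediate from holomorphy of $g$ on $\HH^+$ and $z\mapsto -z$ being a biholomorphism $\HH^-\to\HH^+$. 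It then remains to verify that this $f$ satisfies $f|_k\WT\g=\rho(\WT\g)f$ for \emph{all} $\WT\g\in\WT\GL_2(\Z)$, not merely for $\WT\g=\WT R$.

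The key step is this last verification, and it is where the real work lies. Since $\WT\SL_2(\Z)$ together with $\WT R$ generates $\WT\GL_2(\Z)$ (every coset of $\WT\SL_2(\Z)$ of determinant $-1$ is $\WT R\cdot\WT\SL_2(\Z)$), and since $f|_k$ is already known to be a genuine group action by Proposition \ref{prop:itisanaction}, it suffices to check the modularity relation on the generators $\WT S$, $\WT T$, and $\WT R$. For $\WT R$ it holds by construction. For $\WT\g\in\{\WT S,\WT T\}\subset\WT\SL_2(\Z)$ the relation $(f|_k\WT\g)^+=\rho(\WT\g)f^+$ is exactly the hypothesis that $g=f^+$ is modular for $\WT\SL_2(\Z)$; the content is to check the $\HH^-$ component, i.e. that $(f|_k\WT\g)^-=\rho(\WT\g)f^-$ on $\HH^-$. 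Using the $\det(\g)=+1$, lower-half-plane line of (\ref{eqn:actionoftildegl2summary}) — which expresses $(f|_k[\g,\e])^-(z)$ in terms of $f^-(\g z)$, $B(\g)$, and $\phi^+_{R\g R}(-z)$ — and the definition $f^-(w)=i^{-2k}\rho(\WT R)g(-w)$, this reduces to the identity $g|_k(R\g R,\text{appropriate }\phi)=\rho(R\g R,\cdots)g$ conjugated by $\rho(\WT R)$, i.e. to the relation $\rho(\WT R)\rho(\WT\g)=\rho(\WT R\,\WT\g)$ combined with $\WT R[\g,\e]\WT R^{-1}=[R\g R,B(\g)\e]$ from Lemma \ref{lem:RgammaR} and the modularity of $g$ for the element $\WT R\WT\g\WT R^{-1}\in\WT\SL_2(\Z)$. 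So the heart of the argument is bookkeeping: matching the multiplier factors $B(\g)$, $A(R,\g)$ appearing in (\ref{eqn:actionoftildegl2summary}) against the cocycle and the homomorphism property of $\rho$, exactly the kind of identity already established in the proof of Proposition \ref{prop:itisanaction}. I expect the main obstacle to be purely organizational — keeping the $\pm$ superscripts, the factors of $i^{2k}$, and the cocycle values $A(R,\cdot)$, $B(\cdot)$ straight — rather than any genuine difficulty, since all the needed cocycle identities (Lemmas \ref{lem:AAB}, \ref{lem:RgammaR}, \ref{lem:AAequalsBBB}) are already in hand and Proposition \ref{prop:itisanaction} guarantees associativity of the action.
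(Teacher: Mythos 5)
Your proposal is correct and follows essentially the same route as the paper: injectivity from the relation $f|_k\WT R=\rho(\WT R)f$ determining $f^-$ from $f^+$, and surjectivity by defining $f^-$ via that same relation and then verifying modularity for $\WT\SL_2(\Z)$ on $\HH^-$ using Lemma \ref{lem:RgammaR}, the explicit action \eqref{eqn:actionoftildegl2summary}, and the modularity of $g$ under $\WT R\WT\g\WT R^{-1}$ (together with $\rho([I,-1])=(-1)^{2k}I$ from Lemma \ref{lem:rhoIminusone}). The only cosmetic differences are that you write $i^{-2k}\rho(\WT R)$ where the paper writes the equivalent $i^{2k}\rho(\WT R)^{-1}$, and that you reduce to the generators $\WT S,\WT T,\WT R$ via Proposition \ref{prop:itisanaction} whereas the paper checks all of $\WT\SL_2(\Z)$ at once.
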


Before proving Theorem \ref{thm:resisom} we explain the second part of our characterization, which involves induction, and is necessary because not every representation of $\WT\SL_2(\Z)$ is the restriction of a representation of $\WT\GL_2(\Z)$. Given a representation $\rho:\WT\SL_2(\Z)\to \GL(V)$ let us write $\Ind\rho$ for the representation of $\WT\GL_2(\Z)$ obtained by induction from $\rho$, which we may realize concretely as follows. Let $\Ind V:=V\oplus V$ be the direct sum of two copies of $V$. We regard elements of $\Ind V$ as $2$-component column vectors with first entry corresponding to the first summand and second entry corresponding to the second. Then, given $A,B,C,D\in \GL(V)$ it is natural to write $\left(\begin{smallmatrix}A&B\\ C&B\end{smallmatrix}\right)$ for the element of $\GL(\Ind V)$ that acts as $A$ on the first summand of $\Ind V$, maps the second summand to the first summand via $B$, etc. With this understanding we may define $\Ind\rho:\WT\GL_2(\Z)\to \GL(\Ind V)$ by requiring that
\begin{gather}\label{eqn:indrho}
	\begin{split}
	(\Ind\rho)(\WT\g)&=
	\begin{pmatrix}
	\rho(\WT\g)&0\\
	0&\rho(\WT R\WT\g\WT R^{-1})	
	\end{pmatrix},\\
	\vspace{1cm}\\
	(\Ind\rho)(\WT R\WT\g)&=
	\begin{pmatrix}
	0&\rho(\WT R\WT\g\WT R^{-1})\\
	(-1)^{2k}\rho(\WT\g)&0	
	\end{pmatrix},
	\end{split}
\end{gather}
for all $\WT\g=[\g,\e]$ with $\det(\g)=1$. See Proposition \ref{prop:indrho} for a verification that this rule defines a representation.

We may now seek an analogue of (\ref{eqn:resmaponmodularforms}) for induction but it turns out that the obvious map $M_k(\WT\SL_2(\Z),\rho)\to M_k(\WT\GL_2(\Z),\Ind \rho)$ is generally not an isomorphism; it is injective but not always surjective. 

To remedy this we define an operation $\rho\mapsto\rho^R$ for representations of $\WT\SL_2(\Z)$ by setting
\begin{gather}\label{eqn:rhoR}
\rho^R(\WT\g):=\rho(\WT R\WT \g\WT R^{-1})
\end{gather}
for $\WT\g\in \WT\SL_2(\Z)$. Then the appropriate analogue of (\ref{eqn:resmaponmodularforms}) is a map
\begin{gather}\label{eqn:indmaponmodularforms}
\Ind: M_k(\WT\SL_2(\Z),\rho)\oplus M_k(\WT\SL_2(\Z),\rho^R)\to M_k(\WT\GL_2(\Z),\Ind \rho)
\end{gather}
which we define as follows. Given a representation $\rho:\WT\SL_2(\Z)\to \GL(V)$ we write elements of $M_k(\WT\SL_2(\Z),\rho)\oplus M_k(\WT\SL_2(\Z),\rho^R)$ as pairs $(f,g)$ where $f,g:\HH^+ \to V$ are such that $f\in M_k(\WT\SL_2(\Z),\rho)$ and $g\in M_k(\WT\SL_2(\Z),\rho^R)$. Then for such a pair $(f,g)$ we define $\Ind(f,g):\HH^*\to \Ind V$ by requiring that $\Ind (f,g)^\pm:=\Ind (f,g)|_{\HH^\pm}$ satisfy
\begin{gather}\label{eqn:indf}
\begin{split}
	\Ind (f,g)^+&=
		\begin{pmatrix} f\\ g\end{pmatrix},\\
	\Ind (f,g)^-&=
		\begin{pmatrix} (-i)^{2k}(g\circ R)\\ i^{2k}(f\circ R)\end{pmatrix}.
\end{split}
\end{gather}
We check in Proposition \ref{prop:indf} that $\Ind(f,g)$ belongs to $M_k(\WT\GL_2(\Z),\Ind \rho)$. We can now state the second part of our characterization of modular forms on the double half-plane.
\begin{thm}\label{thm:indisom}
For $k\in \frac12\Z$ and $\rho:\WT\SL_2(\Z)\to \GL(V)$ a representation the induction map $\Ind: M_k(\WT\SL_2(\Z),\rho)\oplus M_k(\WT\SL_2(\Z),\rho^R)\to M_k(\WT\GL_2(\Z),\Ind \rho)$ is an isomorphism.
\end{thm}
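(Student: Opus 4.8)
\textbf{Proof proposal for Theorem \ref{thm:indisom}.}

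The plan is to produce an explicit two-sided inverse to the map $\Ind$. First I would verify well-definedness: Proposition \ref{prop:indf} (which I may assume) tells us that $\Ind(f,g)\in M_k(\WT\GL_2(\Z),\Ind\rho)$ whenever $f\in M_k(\WT\SL_2(\Z),\rho)$ and $g\in M_k(\WT\SL_2(\Z),\rho^R)$, so the source and target are as stated. Next I would check injectivity: by the second line of~\eqref{eqn:indf} the value of $\Ind(f,g)$ on $\HH^-$ is already determined by $f$ and $g$ via the first line, so if $\Ind(f,g)=0$ then in particular its restriction to $\HH^+$ vanishes, forcing $f=g=0$; hence $\Ind$ is injective. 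The real content is surjectivity, and here the idea is to use Theorem \ref{thm:resisom}. Given $F\in M_k(\WT\GL_2(\Z),\Ind\rho)$, set $(f,g):=F|_{\HH^+}$, reading off the two components of $\Ind V=V\oplus V$, so that $f,g:\HH^+\to V$. By Theorem \ref{thm:resisom} applied to $\rho':=\Ind\rho$, the function $F$ is the unique modular form for $\WT\GL_2(\Z)$ extending $F|_{\HH^+}\in M_k(\WT\SL_2(\Z),\Res\,\Ind\rho)$. So it suffices to (i) show that $F|_{\HH^+}$, as an element of $M_k(\WT\SL_2(\Z),\Res\,\Ind\rho)$, forces $f\in M_k(\WT\SL_2(\Z),\rho)$ and $g\in M_k(\WT\SL_2(\Z),\rho^R)$, and (ii) show that $\Ind(f,g)$ has the same restriction to $\HH^+$ as $F$, namely $(f,g)^T$; then uniqueness in Theorem \ref{thm:resisom} gives $F=\Ind(f,g)$.

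For step (i): from the first line of~\eqref{eqn:indrho}, for $\WT\g\in\WT\SL_2(\Z)$ the matrix $(\Ind\rho)(\WT\g)$ is block-diagonal with blocks $\rho(\WT\g)$ and $\rho(\WT R\WT\g\WT R^{-1})=\rho^R(\WT\g)$. Since $F$ is modular for $\WT\GL_2(\Z)$ it is in particular modular for $\WT\SL_2(\Z)$ acting on $\HH^+$, i.e. $(F|_k\WT\g)^+=(\Ind\rho)(\WT\g)\,F^+$ for $\WT\g\in\WT\SL_2(\Z)$. By~\eqref{eqn:actionoftildesl2} the left-hand side acts componentwise on $(f,g)^T$, so comparing components yields $f|_k\WT\g=\rho(\WT\g)f$ and $g|_k\WT\g=\rho^R(\WT\g)g$ for all $\WT\g\in\WT\SL_2(\Z)$. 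Holomorphy of $f$ and $g$ on $\HH^+$ follows from holomorphy of $F$ on $\HH^*$. Hence $(f,g)\in M_k(\WT\SL_2(\Z),\rho)\oplus M_k(\WT\SL_2(\Z),\rho^R)$, which also shows the natural ``restrict to $\HH^+$ and split'' map is a candidate inverse. For step (ii) the map $\Ind$ is defined precisely so that $\Ind(f,g)^+=(f,g)^T=F^+$; then $\Ind(f,g)$ and $F$ are two elements of $M_k(\WT\GL_2(\Z),\Ind\rho)$ with the same restriction to $\HH^+$, so by Theorem \ref{thm:resisom} they are equal. This proves surjectivity and completes the argument; injectivity shown above then gives that $\Ind$ is an isomorphism.

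The step I expect to be the main obstacle is the interaction in step (ii) between Theorem \ref{thm:resisom} and the specific constants $(\pm i)^{2k}$ appearing in~\eqref{eqn:indf} and~\eqref{eqn:indrho}. Concretely, I must make sure that the unique $\WT\GL_2(\Z)$-extension of $(f,g)^T$ produced by Theorem \ref{thm:resisom} really does have the second line of~\eqref{eqn:indf} as its restriction to $\HH^-$; equivalently, that the $F$ we started with satisfies $F^-=\bigl((-i)^{2k}(g\circ R),\ i^{2k}(f\circ R)\bigr)^T$. This should follow by applying the modularity relation $F|_k\WT R=(\Ind\rho)(\WT R)F$ and reading off the $\HH^-$ side using the $\det=-1$ cases of~\eqref{eqn:actionoftildegl2summary} together with the second line of~\eqref{eqn:indrho}: the choice~\eqref{eqn:actionofR} contributes an $i^{2k}$, the $(-1)^{2k}$ in~\eqref{eqn:indrho} combines with it, and one checks the two components come out as $(-i)^{2k}(g\circ R)$ and $i^{2k}(f\circ R)$ respectively. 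I would also remark that this computation is essentially the same bookkeeping already carried out in Proposition \ref{prop:indf}, so invoking that proposition (which we are permitted to assume) lets us bypass most of the constant-chasing: Proposition \ref{prop:indf} guarantees $\Ind(f,g)\in M_k(\WT\GL_2(\Z),\Ind\rho)$ directly, after which the uniqueness clause of Theorem \ref{thm:resisom} closes the loop with no further calculation. The remaining care is purely formal: checking that the two assignments $F\mapsto(f,g)$ and $(f,g)\mapsto\Ind(f,g)$ are mutually inverse as maps of vector spaces, which is immediate once surjectivity and injectivity are in hand.
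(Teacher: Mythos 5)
Your proposal is correct and follows essentially the same route as the paper's proof: restrict $F$ to $\HH^+$, split it into components $f=(\pi_1\circ F)^+$ and $g=(\pi_2\circ F)^+$, use the block-diagonal form of $(\Ind\rho)(\WT\g)$ for $\det(\g)=1$ to see that $f\in M_k(\WT\SL_2(\Z),\rho)$ and $g\in M_k(\WT\SL_2(\Z),\rho^R)$, and then recover $F=\Ind(f,g)$. The only (harmless) difference is at the last step: the paper reads off $F^-$ directly from the $\WT R$-modularity to match the second line of~\eqref{eqn:indf}, whereas you invoke Proposition~\ref{prop:indf} together with the injectivity of $\Res$ from Theorem~\ref{thm:resisom} to avoid redoing that constant-chasing, which is a legitimate shortcut given that both results are available.
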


Before proving Theorems \ref{thm:resisom} and \ref{thm:indisom} we point out a basic property of representations of $\WT\SL_2(\Z)$ in Lemma \ref{lem:rhoIminusone}, verify that the rule (\ref{eqn:indrho}) defines a representation of $\WT\GL_2(\Z)$ in Proposition \ref{prop:indrho}, and verify that the rule (\ref{eqn:indf}) defines a modular form for $\WT\GL_2(\Z)$ in Proposition \ref{prop:indf}. 
\begin{lem}\label{lem:rhoIminusone}
If $k\in \frac12\Z$ and $\rho:\WT\SL_2(\Z)\to \GL(V)$ is a representation such that $M_k(\WT\SL_2(\Z),\rho)$ is non-zero then $\rho((I,-1))=(-1)^{2k}I$.
\end{lem}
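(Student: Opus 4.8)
The plan is to use the transformation property of a putative modular form $f\in M_k(\WT\SL_2(\Z),\rho)$ under the central element $(I,-1)\in\WT\SL_2(\Z)$. First I would compute $f|_k(I,-1)$ directly from the definition of the weight $k$ action. Since $(I,-1)=\iota(I,-1)$ corresponds to $\g=I$, $\phi\equiv -1$, we have $f|_k(I,-1)=(f\circ I)\cdot(-1)^{-2k}=(-1)^{-2k}f=(-1)^{2k}f$ (using $(-1)^{-2k}=(-1)^{2k}$ for $k\in\frac12\Z$). On the other hand, the modularity condition $f|_k\WT\g=\rho(\WT\g)f$ applied to $\WT\g=(I,-1)$ gives $f|_k(I,-1)=\rho((I,-1))f$. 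Hence $\rho((I,-1))f=(-1)^{2k}f$, i.e. $(\rho((I,-1))-(-1)^{2k}I)f=0$ pointwise on $\HH^+$.

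The second step is to upgrade this pointwise identity to the operator identity $\rho((I,-1))=(-1)^{2k}I$ on all of $V$. This is where the hypothesis that $M_k(\WT\SL_2(\Z),\rho)$ is non-zero enters, but it requires a little care: a single non-zero $f$ only tells us that $f(z)$ lies in the kernel of $\rho((I,-1))-(-1)^{2k}I$ for each $z$, and a priori the values $f(z)$ need not span $V$. The key observation I would use is that $(I,-1)$ is central in $\WT\SL_2(\Z)$, so the subspace $W:=\ker(\rho((I,-1))-(-1)^{2k}I)\subseteq V$ is $\rho$-invariant (being an eigenspace of a central operator). Therefore, if $f$ is a non-zero modular form with representation $\rho$, then $f$ is in fact a non-zero modular form with representation $\rho|_W$, viewed as a $W$-valued form — and the point is that $W=V$. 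To see $W=V$: pick $z_0\in\HH^+$ with $f(z_0)\neq 0$; then $f(z_0)\in W$, and since $W$ is $\rho$-invariant, $\rho(\WT\g)f(z_0)\in W$ for all $\WT\g$; but $\rho(\WT\g)f(z_0)=(f|_k\WT\g)(z_0)=f(\g z_0)\phi(z_0)^{-2k}$, so all the values $f(\g z_0)$ (up to nonzero scalars) lie in $W$. Since $\SL_2(\Z)$ acts on $\HH^+$ and the translates $\g z_0$ are Zariski-dense (indeed accumulate), and $f$ is holomorphic, the span of $\{f(z):z\in\HH^+\}$ — which by the identity theorem equals the span of the values on any subset with an accumulation point — lies in $W$. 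Hence the image of $f$ spans a subspace of $W$; but actually what we need is just that $\mathrm{span}\{f(z)\}\subseteq W$ implies nothing more unless this span is all of $V$.

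Here I should be more careful, because the span of $\{f(z)\}$ need not be $V$ in general. The cleaner route, and the one I expect the authors take, is: replace $V$ by the $\rho$-invariant subspace $V':=\mathrm{span}_{\CC}\{f(z):z\in\HH^*\}$. Wait — $f$ is only defined on $\HH^+$ here, so $V':=\mathrm{span}_\CC\{f(z):z\in\HH^+\}$. Since $W$ is $\rho$-invariant and contains $f(z_0)\neq 0$, and since (by holomorphy and the fact that $f\not\equiv 0$) the values $f(z)$ for $z$ near $z_0$ already fail to all lie in a proper subspace only if... hmm, this still does not force $V'=W$. The honest statement is: we get $\rho((I,-1))=(-1)^{2k}I$ \emph{on $V'$}, and then the lemma as stated — asserting it on all of $V$ — must be using that without loss of generality one may assume $\rho$ is the subrepresentation on $V'$, or else the lemma is really a statement about the representation generated by a nonzero form. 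I would therefore phrase the argument as: the nonzero value $f(z_0)$ generates a $\WT\SL_2(\Z)$-submodule $V'\subseteq W$ of $V$ (since $W$ is a submodule containing $f(z_0)$, the submodule generated by $f(z_0)$ is inside $W$), and on this submodule $\rho((I,-1))$ acts as $(-1)^{2k}$; but $V'$ need not be $V$.

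Given the tension above, the \textbf{main obstacle} — and the step I would flag as the crux — is precisely this passage from "$f(z)$ is a $(-1)^{2k}$-eigenvector of $\rho((I,-1))$ for all $z$" to "$\rho((I,-1))=(-1)^{2k}I$ on $V$". I expect the intended argument exploits that for Theorems \ref{thm:resisom} and \ref{thm:indisom} one only ever needs the conclusion for representations that actually carry a modular form, and that the \emph{contrapositive} is what gets used: if $\rho((I,-1))\neq(-1)^{2k}I$ then $M_k=0$. For that contrapositive one argues that the eigenspace decomposition $V=\bigoplus_\mu V_\mu$ under the central involution-like element $\rho((I,-1))$ (note $(I,-1)^2=(I,1)=$ identity, so $\rho((I,-1))^2=I$, forcing eigenvalues $\pm1$) is $\rho$-stable, hence $M_k(\WT\SL_2(\Z),\rho)=\bigoplus_\mu M_k(\WT\SL_2(\Z),\rho|_{V_\mu})$, and on the summand $V_\mu$ with $\mu\neq(-1)^{2k}$ the computation $f|_k(I,-1)=(-1)^{2k}f$ versus $=\mu f$ forces $f\equiv 0$. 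So in the end $M_k(\WT\SL_2(\Z),\rho)$ is supported entirely on the $(-1)^{2k}$-eigenspace, and if that eigenspace is all of $V$ we are done; otherwise $M_k\neq 0$ still only pins down $\rho((I,-1))$ on a summand. I would present the proof as the eigenspace-decomposition argument, concluding that non-vanishing of $M_k(\WT\SL_2(\Z),\rho)$ forces the $(-1)^{2k}$-eigenspace to be nonzero, and — if we additionally know (or assume, as is standard for vector-valued forms appearing in this theory) that $\rho$ is generated by the values of a single form, or simply restrict attention to that subrepresentation — that it is everything, giving $\rho((I,-1))=(-1)^{2k}I$. The short, clean writeup: compute $f|_k(I,-1)=(-1)^{2k}f$; compare with $\rho((I,-1))f$; decompose $V$ into $\pm1$-eigenspaces of the order-$\le2$ operator $\rho((I,-1))$; observe modular forms live only in the $(-1)^{2k}$-eigenspace; conclude.
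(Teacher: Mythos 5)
Your opening computation --- $f|_k(I,-1)=(-1)^{-2k}f=(-1)^{2k}f$, hence $\rho((I,-1))f=(-1)^{2k}f$ for any $f\in M_k(\WT\SL_2(\Z),\rho)$ --- is, essentially verbatim, the whole of the paper's proof; the authors then simply assert that for nonzero $f$ this ``implies $\rho((I,-1))=(-1)^{2k}I$.'' So the step you spend most of your writeup worrying about is exactly the step the paper leaves unjustified, and your worry is well founded. The pointwise identity only places every value $f(z)$ in the $(-1)^{2k}$-eigenspace $W$ of the involution $\rho((I,-1))$; since $(I,-1)$ is central, $W$ is a subrepresentation, and your decomposition argument correctly shows $M_k(\WT\SL_2(\Z),\rho)=M_k(\WT\SL_2(\Z),\rho|_W)$, but nothing forces $W=V$. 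In fact the lemma as literally stated fails: take $k=4$ and $\rho=1\oplus\rho_\eta$ on $V=\C^2$, where $\rho_\eta$ is the eta multiplier, so that $\rho_\eta((I,-1))=-1$. Then $\binom{E_4}{0}$ is a nonzero element of $M_4(\WT\SL_2(\Z),\rho)$, yet $\rho((I,-1))=\mathrm{diag}(1,-1)\neq(-1)^{8}I$.

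The damage is contained, because in the one place the lemma is invoked (the surjectivity half of the proof of Theorem \ref{thm:resisom}) only the pointwise statement $\rho([I,-1])f^+=(-1)^{2k}f^+$ for the given form $f^+$ is needed, together with the centrality of $[I,-1]$; so the theorem is unaffected. For your own writeup, state and prove exactly that pointwise version (your first paragraph already does this cleanly), or else add the hypothesis that $V$ is spanned by the values of elements of $M_k(\WT\SL_2(\Z),\rho)$ (equivalently, pass to the subrepresentation $W$). What you should not do is present the long, hedged discussion as a proof: it candidly fails to reach the stated conclusion, and rightly so, since that conclusion does not follow from the stated hypotheses.
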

\begin{proof}
For $f:\HH^+\to V$ we have $f|_k(I,-1) =(-1)^{2k}f$ by (\ref{eqn:fplustildegamma}). So if $f$ is not zero then $f|_k(I,-1)=\rho((I,-1))f$ implies $\rho((I,-1))=(-1)^{2k}I$.
\end{proof}

\begin{prop}\label{prop:indrho}
If $\rho$ is a representation of $\WT\SL_2(\Z)$ then the rule (\ref{eqn:indrho}) defines a representation of $\WT\GL_2(\Z)$.
\end{prop}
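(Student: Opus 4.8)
The plan is to verify the homomorphism property of $\Ind\rho$ directly from the defining formulas (\ref{eqn:indrho}). Since every element of $\WT\GL_2(\Z)$ is either of the form $\WT\g$ with $\det(\g)=1$ or $\WT R\WT\g$ with $\det(\g)=1$ (because $\WT R$ maps onto a chosen coset representative for $\SL_2(\Z)$ in $\GL_2(\Z)$, and $\WT R^2=[I,-1]$ lies in $\WT\SL_2(\Z)$), there are four cases of products to check, according to whether each factor has determinant $+1$ or $-1$. First I would record the auxiliary facts that will be used repeatedly: that $\rho^R$ as defined in (\ref{eqn:rhoR}) is itself a representation of $\WT\SL_2(\Z)$ (immediate from $\WT R(\WT\g_1\WT\g_2)\WT R^{-1}=(\WT R\WT\g_1\WT R^{-1})(\WT R\WT\g_2\WT R^{-1})$, using Lemma \ref{lem:RgammaR} to see that $\WT R\WT\g\WT R^{-1}\in\WT\SL_2(\Z)$ when $\det(\g)=1$), and that $\WT R^2=[I,-1]$ is central in $\WT\GL_2(\Z)$, so that $\WT R\WT\g\WT R^{-1}$ is well defined regardless of how we write $\WT R^{-1}=\WT R\cdot[I,-1]$.

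Next I would dispatch the four cases. For $\WT\g_1,\WT\g_2$ both of determinant $+1$: $(\Ind\rho)(\WT\g_1)(\Ind\rho)(\WT\g_2)$ is block-diagonal with entries $\rho(\WT\g_1)\rho(\WT\g_2)=\rho(\WT\g_1\WT\g_2)$ and $\rho^R(\WT\g_1)\rho^R(\WT\g_2)=\rho^R(\WT\g_1\WT\g_2)$, matching $(\Ind\rho)(\WT\g_1\WT\g_2)$. For the mixed case $\det(\g_1)=1$, second factor $\WT R\WT\g_2$: multiply the block-diagonal matrix by the anti-diagonal one and read off that the product is anti-diagonal with upper-right entry $\rho(\WT\g_1)\rho(\WT R\WT\g_2\WT R^{-1})$ and lower-left entry $(-1)^{2k}\rho^R(\WT\g_1)\rho(\WT\g_2)$; comparing with $(\Ind\rho)(\WT\g_1\WT R\WT\g_2)=(\Ind\rho)(\WT R\cdot(\WT R^{-1}\WT\g_1\WT R)\cdot\WT\g_2)$ and using $\WT R^{-1}\WT\g_1\WT R=\WT R(\WT R^{-1}\WT\g_1\WT R)\WT R^{-1}$-type bookkeeping, together with $\rho(\WT R(\WT R^{-1}\WT\g_1\WT R)\WT R^{-1})=\rho(\WT\g_1)$, gives the match. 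The case $\WT R\WT\g_1$ times $\WT\g_2$ is symmetric. The genuinely substantive case is $\WT R\WT\g_1$ times $\WT R\WT\g_2$: here the product of two anti-diagonal matrices is block-diagonal, with upper-left entry $\rho(\WT R\WT\g_1\WT R^{-1})\cdot(-1)^{2k}\rho(\WT\g_2)$ and lower-right entry $(-1)^{2k}\rho(\WT\g_1)\rho(\WT R\WT\g_2\WT R^{-1})$, and one must check this equals $(\Ind\rho)(\WT R\WT\g_1\WT R\WT\g_2)=(\Ind\rho)([I,-1]\cdot(\WT R^{-1}\WT R\WT\g_1\WT R)\cdot\WT\g_2)$; the factor $(-1)^{2k}$ in each entry is exactly what the central $\WT R^2=[I,-1]$ contributes once we invoke Lemma \ref{lem:rhoIminusone}-style reasoning — more precisely, $\Ind\rho$ need not satisfy that lemma's hypothesis, but the factor $(-1)^{2k}$ is built into (\ref{eqn:indrho}) precisely so that $(\Ind\rho)(\WT R^2)=(\Ind\rho)([I,-1])=(-1)^{2k}I$, which one checks by setting $\WT\g=[I,1]$ and composing the two displayed anti-diagonal blocks.

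The main obstacle is purely bookkeeping: keeping straight which conjugate $\WT R\WT\g\WT R^{-1}$ appears in which block as the matrices are multiplied, and correctly reducing products like $\WT R\WT\g_1\WT R\WT\g_2$ to the normal form (determinant-one element, or $\WT R$ times determinant-one element) before applying the defining formula. The key simplifying observations that make this manageable are that $\WT R^2$ is central so conjugation by $\WT R$ is an involution on the image, and that $\rho^R\circ(\text{conj by }\WT R)=\rho$, so all the conjugates collapse two levels deep. I expect no case to require more than a line or two of $2\times2$ block matrix multiplication once these reductions are in hand; the one to write out most carefully is the fourth, $\WT R\WT\g_1$ against $\WT R\WT\g_2$, since that is where the $(-1)^{2k}$ factors must be tracked and where associativity of $\WT\GL_2(\Z)$ (equivalently the cocycle identity (\ref{eqn:cocycleidentityA})) is implicitly used to justify the rearrangement.
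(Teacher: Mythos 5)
Your overall strategy --- case-by-case verification of multiplicativity using the normal form ``determinant-one element, or $\WT R$ times a determinant-one element'' --- is the same as the paper's, and your first three cases go through (the collapse $\WT R^{-1}\WT\g\WT R=\WT R\WT\g\WT R^{-1}$ needed in the mixed cases is correct, since $\WT R^{2}=[I,-1]$ is central). The gap is in the fourth case, exactly the one you single out as substantive. There you assert that $(\Ind\rho)(\WT R^{2})=(\Ind\rho)([I,-1])=(-1)^{2k}I$, ``checked by composing the two displayed anti-diagonal blocks.'' But composing those blocks computes $(\Ind\rho)(\WT R)\cdot(\Ind\rho)(\WT R)=(-1)^{2k}I$, whereas $(\Ind\rho)([I,-1])$ is prescribed by the \emph{first} line of (\ref{eqn:indrho}) to be $\rho([I,-1])\oplus\rho([I,-1])$; equating the two is itself an instance of the multiplicativity you are trying to prove, so the step is circular. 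Concretely, the $(-,-)$ case reduces to
\begin{gather*}
(-1)^{2k}\,\rho(\WT R\WT\g_1\WT R^{-1})\,\rho(\WT\g_2)\ \stackrel{?}{=}\ \rho(\WT R\WT\g_1\WT R\WT\g_2)=\rho(\WT R\WT\g_1\WT R^{-1})\,\rho([I,-1])\,\rho(\WT\g_2),
\end{gather*}
that is, to $\rho([I,-1])=(-1)^{2k}I$, and this is \emph{not} automatic for an arbitrary representation of $\WT\SL_2(\Z)$: for the trivial $\rho$ and $k=\tfrac12$ one has $\rho([I,-1])=1\neq -1$, and then $(\Ind\rho)(\WT R)^{2}=-I\neq I=(\Ind\rho)(\WT R^{2})$, so $\Ind\rho$ fails to be a homomorphism.

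The missing ingredient is therefore a hypothesis on $\rho$ (not on $\Ind\rho$, which is where you direct your worry): one needs $\rho([I,-1])=(-1)^{2k}I$, which Lemma \ref{lem:rhoIminusone} supplies whenever $M_k(\WT\SL_2(\Z),\rho)\neq 0$, i.e.\ in every case where the construction is actually used. To be fair, the paper's own proof writes out only the $(+,+)$ and $(-,+)$ cases and leaves ``the remaining cases'' to the reader, so it elides the same point; but your write-up explicitly claims to resolve it, and the resolution offered does not work. The fix is to invoke $\rho([I,-1])=(-1)^{2k}I$ as an assumption on $\rho$ (noting its provenance in Lemma \ref{lem:rhoIminusone}) when reducing $\WT R\WT\g_1\WT R\WT\g_2$ to a determinant-one element, rather than trying to derive $(\Ind\rho)([I,-1])=(-1)^{2k}I$ from the matrix product.
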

\begin{proof}
We have to check that $(\Ind\rho)(\WT\A)(\Ind\rho)( \WT\B)=(\Ind \rho)(\WT\A\WT\B)$ for $\WT\A,\WT\B\in \WT\GL_2(\Z)$. Write $\WT\A=[\A,\e]$ and $\WT\B=[\B,\delta]$. If $\det(\A)=\det(\B)=1$ then 
\begin{gather}
\begin{split}
	(\Ind\rho)(\WT\A)(\Ind\rho)( \WT\B)
	&=\begin{pmatrix}
		\rho(\WT \A)&0\\
		0&\rho(\WT R\WT \A \WT R^{-1})
	\end{pmatrix}
	\begin{pmatrix}
	\rho(\WT\B)&0\\0&\rho(\WT R\WT \B \WT R^{-1})
	\end{pmatrix}
	\\
	&=\begin{pmatrix}
		\rho(\WT \A)\rho(\WT\B)&0\\
		0&\rho(\WT R\WT \A \WT R^{-1})\rho(\WT R\WT \B \WT R^{-1})
	\end{pmatrix}
	\\
	&=\begin{pmatrix}
		\rho(\WT \A\WT\B)&0\\
		0&\rho(\WT R\WT \A \WT \B \WT R^{-1})
	\end{pmatrix}
	\\&=(\Ind\rho)(\WT\A\WT\B)
\end{split}
\end{gather}
since $\rho$ is a representation of $\WT\SL_2(\Z)$ by hypothesis. If $\det(\A)=-1$ and $\det(\B)=1$ then $\WT R^{-1}\WT\A=[R\A,-A(R,\A)\e]$ and $\det(R\A)=1$ so we substitute $\WT R^{-1}\WT\A$ for $\WT\g$ in the second line of (\ref{eqn:indrho}) to obtain
\begin{gather}
\begin{split}
	(\Ind\rho)(\WT\A)(\Ind\rho)( \WT\B)
	&=\begin{pmatrix}
		0&\rho(\WT\A\WT R^{-1})\\
		(-1)^{2k}\rho(\WT R^{-1}\WT \A )&0
	\end{pmatrix}
	\begin{pmatrix}
	\rho(\WT\B)&0\\0&\rho(\WT R\WT \B \WT R^{-1})
	\end{pmatrix}
	\\
	&=\begin{pmatrix}
		0&\rho(\WT\A\WT\B\WT R^{-1})\\
		(-1)^{2k}\rho(\WT R^{-1}\WT \A\WT\B )&0
	\end{pmatrix}
\end{split}
\end{gather}
which is what we obtain by substituting $\WT R^{-1}\WT \A\WT\B$ for $\WT\g$ in the second line of (\ref{eqn:indrho}), so is $(\Ind\rho)(\WT\A\WT\B)$ as required. The remaining cases are similar and we leave them to the reader.
\end{proof}

\begin{prop}\label{prop:indf}
If $f$ is a modular form of weight $k$ for $\WT\SL_2(\Z)$ with representation $\rho$ then the rule (\ref{eqn:indf}) defines a modular form of weight $k$ for $\WT\GL_2(\Z)$ with representation $\Ind \rho$.
\end{prop}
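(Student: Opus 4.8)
The plan is to verify directly that $\Ind(f,g)$, as defined by (\ref{eqn:indf}), satisfies $\Ind(f,g)|_k\WT\A = (\Ind\rho)(\WT\A)\Ind(f,g)$ for all $\WT\A\in\WT\GL_2(\Z)$. Since $\WT\GL_2(\Z)$ is generated by $\WT\SL_2(\Z)$ together with $\WT R$, and since both the weight $k$ action (by Proposition \ref{prop:itisanaction}) and $\Ind\rho$ (by Proposition \ref{prop:indrho}) are genuine actions/homomorphisms, it suffices to check the modular transformation law for $\WT\A\in\iota(\WT\SL_2(\Z))$ and for $\WT\A=\WT R$. Actually, to keep the bookkeeping clean I would first note that $\Ind(f,g)$ is manifestly holomorphic on $\HH^*$ (its four component functions are built from $f$, $g$ holomorphic on $\HH^+$, precomposed with $z\mapsto -z$), so only the transformation law needs checking.

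First I would handle $\WT\A=\WT R$: by (\ref{eqn:actionofR}), $(\Ind(f,g)|_k\WT R)(z) = i^{2k}\Ind(f,g)(-z)$, so on $\HH^+$ this reads $i^{2k}\Ind(f,g)^-(-z) = i^{2k}\bigl(\begin{smallmatrix}(-i)^{2k}(g\circ R)(-z)\\ i^{2k}(f\circ R)(-z)\end{smallmatrix}\bigr) = \bigl(\begin{smallmatrix}g(z)\\ (-1)^{2k}f(z)\end{smallmatrix}\bigr)$, using $i^{2k}(-i)^{2k}=1$ and $(i^{2k})^2=(-1)^{2k}$; on $\HH^-$ it reads $i^{2k}\Ind(f,g)^+(-z) = \bigl(\begin{smallmatrix}i^{2k}(f\circ R)(z)\\ i^{2k}(g\circ R)(z)\end{smallmatrix}\bigr)$ — wait, I need to be careful: $i^{2k}\bigl(\begin{smallmatrix}f(-z)\\ g(-z)\end{smallmatrix}\bigr)$, which I then compare against $(\Ind\rho)(\WT R)\Ind(f,g)^-$. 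Setting $\WT\g=\WT I$ (so $\det\g=1$) in the second line of (\ref{eqn:indrho}) gives $(\Ind\rho)(\WT R) = \bigl(\begin{smallmatrix}0&I\\ (-1)^{2k}I&0\end{smallmatrix}\bigr)$, and applying this to the two component expressions above yields precisely the required equalities after a short check using $i^{2k}(-i)^{2k}=1$. This confirms the $\WT R$ law.

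Next I would handle $\WT\A=[\g,\e]\in\iota(\WT\SL_2(\Z))$, i.e.\ $\det(\g)=1$. Here I use the explicit $\det=+1$ rows of (\ref{eqn:actionoftildegl2summary}): on $\HH^+$, $(\Ind(f,g)|_k\WT\A)^+(z) = \bigl(\begin{smallmatrix}f(\g z)\\ g(\g z)\end{smallmatrix}\bigr)(\e\phi_\g^+(z))^{-2k}$, which equals $\bigl(\begin{smallmatrix}(f|_k\WT\A)(z)\\ (g|_k\WT\A)(z)\end{smallmatrix}\bigr) = \bigl(\begin{smallmatrix}\rho(\WT\A)f(z)\\ \rho^R(\WT\A)g(z)\end{smallmatrix}\bigr) = (\Ind\rho)(\WT\A)\Ind(f,g)^+(z)$ by the definitions of $f\in M_k(\WT\SL_2,\rho)$, $g\in M_k(\WT\SL_2,\rho^R)$, (\ref{eqn:rhoR}), and the first line of (\ref{eqn:indrho}). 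On $\HH^-$ the computation is analogous but requires the identity $\WT R\WT\A\WT R^{-1} = [R\g R, B(\g)\e]$ from Lemma \ref{lem:RgammaR}; one checks that the factor $(\e B(\g)\phi^+_{R\g R}(-z))^{-2k}$ appearing in the $\HH^-$, $\det=+1$ line of (\ref{eqn:actionoftildegl2summary}) is exactly the automorphy factor of $\WT R\WT\A\WT R^{-1}$ evaluated at $-z$, so that $((f\circ R)|_k(\WT R\WT\A\WT R^{-1}))(z)$ emerges, and then $\rho^R$ and $\rho$ enter via the matrix entries of the first line of (\ref{eqn:indrho}). The main obstacle, and the one deserving the most care, is precisely this bookkeeping on $\HH^-$: matching the scalar automorphy factors $B(\g)$, $A(R,\g)$ etc.\ from (\ref{eqn:actionoftildegl2summary}) against the group-element identities of Lemmas \ref{lem:RgammaR} and \ref{lem:AAB}, and keeping the $(\pm i)^{2k}$ prefactors from (\ref{eqn:indf}) straight; but since every needed cocycle identity has already been established, this reduces to careful substitution rather than any new idea. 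I would close by remarking that holomorphy and the growth condition (if imposed) are inherited componentwise, completing the proof.
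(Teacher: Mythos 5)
Your proposal is correct and follows essentially the same route as the paper: verify the transformation law on the generators $\WT R$ and $\iota(\WT\SL_2(\Z))$, with the explicit $\WT R$ computation on $\HH^+$ matching the paper's line for line, and the $\HH^-$ checks (which the paper leaves to the reader) handled via Lemma \ref{lem:RgammaR} exactly as you indicate. No gaps.
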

\begin{proof}
We require to verify that $\Ind (f,g)|_k\WT\g=(\Ind\rho)(\WT\g)\Ind (f,g)$ for $\WT\g\in \WT\GL_2(\Z)$. It suffices to do this for $\WT\g$ in the generating set $\{\WT R, \WT S, \WT T\}$. For $\WT\g=\WT R$ we observe that
\begin{gather}
\begin{split}
	(\Ind(f,g)|_k\WT R)^+
	&=i^{2k}(\Ind(f,g)^-\circ R)\\
	&=i^{2k}\begin{pmatrix}(-i)^{2k}g\\i^{2k} f\end{pmatrix}\\
	&=\begin{pmatrix}g\\(-1)^{2k} f\end{pmatrix}\\
	&=\begin{pmatrix}0&I\\(-1)^{2k}I&0\end{pmatrix}\begin{pmatrix}f\\g\end{pmatrix}
\end{split}
\end{gather}
which is $((\Ind\rho)(\WT R)\Ind(f,g))^+$ by (\ref{eqn:indrho}). The verification that $(\Ind(f,g)|_k\WT R)^-$ coincides with $((\Ind\rho)(\WT R)\Ind(f,g))^-$ is similar and we omit it. For $\WT\g=\WT S$ or $\WT\g=\WT T$, or indeed for any $\WT\g=[\g,\e]$ with $\det(\g)=1$ we have 
\begin{gather}
\begin{split}
	(\Ind(f,g)|_k\WT \g)^+
	&=\begin{pmatrix}f|_k\WT\g\\ g|_k\WT\g\end{pmatrix}\\
	&=\begin{pmatrix}\rho(\WT\g)f\\\rho^R(\WT\g) g\end{pmatrix}\\
	&=\begin{pmatrix}\rho(\WT\g)&0\\0&\rho^R(\WT\g)\end{pmatrix}\begin{pmatrix}f\\g\end{pmatrix}\\
	&=((\Ind \rho)(\WT\g)\Ind(f,g))^+
\end{split}
\end{gather}
where the second line follows from our hypotheses on $f$ and $g$. The verification that $(\Ind(f,g)|_k\WT \g)^-$ coincides with $((\Ind\rho)(\WT \g)\Ind(f,g))^-$ is again similar so again we leave it to the reader.
\end{proof}

\begin{proof}[Proof of Theorem \ref{thm:resisom}.]
Let $f\in M_k(\widetilde{\GL_2}(\Z),\rho)$. Then $f|_k\WT R =i^{2k}(f\circ R)= \rho(\WT R)f$ so in particular we have $f^-(z)=i^{2k}\rho(\WT R)^{-1}f^+(-z)$ for $z\in \HH^-$ when $f^\pm:=f|_{\HH^\pm}$. So the values of $f$ on $\HH^-$ are determined by those on $\HH^+$ when $f$ is a modular form for $\widetilde{\GL_2}(\Z)$. In other words, the restriction map $M_k(\widetilde{\GL_2}(\Z),\rho)\to M_k(\widetilde{\SL_2}(\Z),\rho)$ is injective. To see that the restriction map is surjective let $f^+\in M_k(\widetilde{\SL_2}(\Z),\rho)$, define a function $f^-:\HH^-\to V$ by setting $f^-(z):=i^{2k}\rho(\WT R)^{-1}f^+(-z)$ for $z\in \HH^-$, and let $f:\HH^*\to V$ be the unique function such that $f|_{\HH^\pm}=f^\pm$. We require to show that $f\in M_k(\widetilde{\GL_2}(\Z),\rho)$. For this it suffices to check that $f|_k\WT R=\rho(\WT R)f$ and $f|_k\WT\g=\rho(\WT \g)f$ for $\WT \g\in \widetilde{\SL_2}(\Z)$. The former of these is true because $(f|_k\WT R)^-=i^{2k}(f^+\circ R)=\rho(\WT R)\rho(\WT R)^{-1}i^{2k}(f^+\circ R)=\rho(\WT R)f^-$, and $(f|_k\WT R)^+=i^{2k}(f^-\circ R)=(-1)^{2k}\rho(\WT R)^{-1}f^+=\rho(\WT R)f^+$, where for the last equality we used $\WT R^{-1}= [I,-1]\WT R$ and Lemma \ref{lem:rhoIminusone}. We have $(f|_k\WT\g)^+=f^+|_k\WT\g=\rho(\WT \g)f^+$ by hypothesis so we just have to check that $(f|_k\WT \g)^-=\rho(\WT \g)f^-$. Write $\WT\g=[\g,\e]$ and note that $f^+|_k\WT R\WT\g\WT R^{-1}=(f^+\circ R\g R)(\e B(\g)\phi^+_{R\g R})^{-2k}$ by Lemma \ref{lem:RgammaR}. Using this and (\ref{eqn:actionoftildegl2summary}) we compute
\begin{gather}
\begin{split}
(f|_k\WT\g)^-&=(f^-\circ\g)(\e B(\g)(\phi^+_{R\g R}\circ R))^{-2k}\\
	&=\rho(\WT R)^{-1}i^{2k}(f^+\circ R\g R\circ R)(\e B(\g)\phi^+_{R\g R}\circ R)^{-2k}\\
	&=\rho(\WT R)^{-1}i^{2k}(f^+|_k(\WT R\WT \g\WT R^{-1})\circ R)\\
	&=\rho(\WT R)^{-1}i^{2k}\rho(\WT R\WT \g\WT R^{-1})(f^+\circ R)\\
	&=\rho(\WT\g)f^-
\end{split}
\end{gather}
which is what we required to show.
\end{proof}

\begin{proof}[Proof of Theorem \ref{thm:indisom}.]
Let $\rho:\widetilde{\SL_2}(\Z)\to \GL(V)$ be a representation and let $F$ be an element of $M_k(\widetilde{\GL_2}(\Z),\Ind \rho)$. Write $\pi_j$ for the projection $\Ind V\to V$ on the $j$-th summand. Then it follows from the definition (\ref{eqn:indrho}) of $\Ind\rho$ that $f:=(\pi_1\circ F)^+$ is an element of $M_k(\widetilde{\SL_2}(\Z),\rho)$ and $g:=(\pi_2\circ F)^+$ is an element of $M_k(\widetilde{\SL_2}(\Z),\rho^R)$. Now the action of $\WT R$ on $\Ind V$ forces $(\pi_1\circ F)^-=(-i)^{2k}(g\circ R)$ and $(\pi_2\circ F)^-=i^{2k}(f\circ R)$. So $F=\Ind (f,g)$. In particular, the induction map (\ref{eqn:indmaponmodularforms}) is surjective. It is also injective because we have just seen that $f=(\pi_1\circ \Ind(f,g))^+$ and $g=(\pi_2\circ\Ind(f,g))^+$. The proof is complete.
\end{proof}

\section{Examples}\label{sec:egs}

We conclude by revisiting the examples discussed in the Introduction, \S\ref{sec:intro}. To say that $f^+$ is a modular form of weight $k$ (in the usual sense) with trivial character is just to say that $f^+\in M_k(\WT\SL_2(\ZZ),1_{\SL})$ where $1_{\SL}:\WT\SL_2(\ZZ)\to \GL(\CC)$ is the trivial representation. Since the trivial representation of $\WT\SL_2(\ZZ)$ trivially extends to the trivial representation $1_{\GL}$ of $\WT\GL_2(\ZZ)$ we can regard Theorem \ref{thm:resisom} as confirming that every $f\in M_k(\WT\GL_2(\ZZ),1_{\GL})$ is obtained by requiring $f^\pm(z)=f^+(\pm z)$ for some (uniquely determined) $f^+\in M_k(\WT\SL_2(\ZZ),1_{\SL})$, where $f^\pm:=f|_{\HH^\pm}$. To see why we restricted to even weights in \S\ref{sec:intro} just note that $M_k(\WT\SL_2(\ZZ),1_{\SL})=\{0\}$ when $k\not\in 2\ZZ$.

To extend the Dedekind eta function to the double half-plane we should apply Theorem \ref{thm:indisom}. To see this let $\rho_\eta:\WT\SL_2(\ZZ)\to \GL(\CC)$ be the character of $\eta$, so that $\eta\in M_{\frac12}(\WT\SL_2(\ZZ),\rho_\eta)$. Then $\rho_\eta$ is not $\Res\rho$ for any representation $\rho:\WT\GL_2(\CC)\to \GL(\CC)$ because 
conjugation by $\WT R$ inverts $\WT T$ and 
$\rho_\eta(\WT T)\neq \rho_\eta(\WT T^{-1})$, but $\GL(\CC)$ is commutative. So the extensions of $\eta$ to the double half-plane are vector-valued with $2$ components, because $\hat\rho_\eta:=\Ind \rho_\eta$ maps $\WT\GL_2(\CC)$ to $\GL_2(\CC)$.
Theorem \ref{thm:indisom} says that $\Ind(\eta,g)$ extends $\eta$ to a modular form on the double half-plane for any $g\in M_{\frac12}(\WT\SL_2(\ZZ),\rho_\eta^R)$. For example, we may take $g=0$. Then $\hat\eta:=\Ind(\eta,0)$ is given explicitly by $\hat\eta(z):=\binom{\eta(z)}{0}$ for $\Im(z)>0$, and $\hat\eta(z):=\binom{0}{i\eta(-z)}$ for $\Im(z)<0$. We have $(\hat\eta|_{\frac12}\WT R)(z)=i\hat\eta(-z)$ according to (\ref{eqn:actionofR}) and
$\hat\rho_\eta(\WT R)=\left(\begin{smallmatrix} 0&1\\-1&0\end{smallmatrix}\right)$ according to (\ref{eqn:indrho}). The reader may check that $\hat\eta|_{\frac12}\WT R = \hat\rho_\eta(\WT R) \hat\eta$, which is just to say that $\hat\eta$ belongs to $M_{\frac12}(\WT\GL_2(\ZZ),\hat\rho_\eta)$, as Theorem \ref{thm:indisom} predicts. The heuristic identity (\ref{eqn:etaRxform}) may now be formulated precisely as 
\begin{gather}
\hat\eta(-z)=\left(\begin{matrix}0&-i\\i&0\end{matrix}\right)\hat\eta(z)
\end{gather}
for $z\in \HH^*$.

\section*{Acknowledgements}

We thank Theo Johnson-Freyd for helpful communication on the double covers of $\GL_2(\ZZ)$, and we thank the anonymous referees for helpful comments.
J.D. gratefully acknowledges financial support from the U.S. National Science Foundation (DMS 1601306). 
D.M. gratefully acknowledges financial support from the Niels Bohr International Academy and from a Carlsberg Distinguished Postdoctoral Fellowship (CF16-0183).


\end{document}